\newif\ifRACSAMsubmissionSNJNL
\pgfplotsset{compat=1.15}
\Crefname{enumi}{}{}
\Crefname{subsection}{Subsection}{Subsections}
\newcommand{\R}{\mathbb R}
\newcommand{\N}{\mathbb N}
\newcommand{\A}{\mathcal A_{c}}
\newcommand{\An}{\mathcal A_{nc}}
\newcommand{\F}{\mathcal F}
\newcommand{\ext}{\textnormal{ext}}
\newcommand{\chull}{\textnormal{c-hull}}
\DeclareMathOperator{\card}{card}
\newcommand{\cardOfInterval}{\card\,}
\newcommand{\lambdaone}{\lambda^1}
\newcommand{\chara}{\chi}
\newtheorem{theorem}{Theorem}[section]
\newtheorem{question}[theorem]{Question}
\newtheorem{lemma}[theorem]{Lemma}
\newtheorem{corollary}[theorem]{Corollary}
\newtheorem{remark}[theorem]{Remark}
\theoremstyle{definition}
\newtheorem{RAWexample}[theorem]{Example}
\newenvironment{example}{\begin{RAWexample}}{\leavevmode\unskip\penalty9999 \hbox{}\nobreak\hfill
                \hskip 0.8em\relax
                \eodsymbol
                \end{RAWexample}}
\newcommand{\eodsymbol}{\hbox{\rlap{$\ulcorner$}$\lrcorner$}}
  \def\({}%
  \def\){}%
  \def\A{A\_c}
  \def\An{A\_nc}
\title[Betweenness and concentric circles]{%
The betweenness relation
distinguishes
non-similar pairs of concentric circles}
\title{%
The betweenness relation
distinguishes
non-similar pairs of concentric circles}
\newcommand\authoremail[1]{\\\texttt{\footnotesize #1}}
\author{Martin Dole\v{z}al\authoremail{dolezal@math.cas.cz}\and Jan Kol\'{a}\v{r}\authoremail{kolar@math.cas.cz}\and Janusz Morawiec\authoremail{janusz.morawiec@us.edu.pl}}
\date{\today}
\begin{document}

\ifRACSAMsubmissionSNJNL


\author[1]{\fnm{Martin} \sur{Dole\v{z}al}}\email{dolezal@math.cas.cz}
\equalcont{These authors contributed equally to this work.}

\author[1]{\fnm{Jan} \sur{Kol\'{a}\v{r}}}\email{kolar@math.cas.cz}
\equalcont{These authors contributed equally to this work.}

\author*[2]{\fnm{Janusz} \sur{Morawiec}}\email{janusz.morawiec@us.edu.pl}
\equalcont{These authors contributed equally to this work.}

\affil[1]{\orgdiv{Institute of Mathematics}, \orgname{Czech Academy of Sciences}, \orgaddress{\street{\v{Z}itn\'{a}~25}, \postcode{115\,67}~\city{Praha~1}, \country{Czech Republic}}}

\affil[2]{\orgdiv{Institute of Mathematics}, \orgname{University of Silesia}, \orgaddress{\street{Bankowa~14}, \city{Katowice}, \postcode{40-007}, \country{Poland}}}

\keywords{betweenness isomorphism classes, betweenness isomorphism invariant for two concentric circles, scaled isometry, collinearity isomorphism}

\pacs[MSC (2020)]{52C45, 03E20, 51M04, 14L30.}

\else

\maketitle

\begingroup 

\fi

\abstract{\mathversion{normal}
Two subsets $A, B$ of the plane are betweenness isomorphic if there is a bijection $f\colon A\to B$ such that, for every $x,y,z\in A$, the point $f(z)$ lies on the line segment connecting $f(x)$ and $f(y)$ if and only if $z$ lies on the line segment connecting $x$ and $y$.
In general, it is quite difficult to tell whether two given subsets of the plane are betweenness isomorphic.
We concentrate on the case when the sets $A,B$ belong to the family $ \mathcal A_c$
of unions of pairs of concentric circles
in the plane.
We prove that $A, B \in \mathcal A_c$ are betweenness isomorphic if and only if they are similar.
In particular, there are continuum many betweenness isomorphism classes in $ \mathcal A_c$, and each of these classes consists exactly of
all scaled translations of an arbitrary representative of the class. 
Furthermore, we show that every betweenness isomorphism between sets $A,B\in \mathcal A_c$ is exactly the restriction of a scaled isometry 
of the plane.}

\ifRACSAMsubmissionSNJNL
\maketitle
\else

\newcommand\keywords[1]{{\noindent\footnotesize\emph{Keywords}: #1.\par}}
\newcommand\pacs[2][PACS]{{\noindent\footnotesize\emph{#1}: #2\par}}
\vspace{4mm}
\keywords{betweenness isomorphism classes, betweenness isomorphism invariant for two concentric circles, scaled isometry, collinearity isomorphism}
\pacs[MSC (2020)]{52C45, 03E20, 51M04, 14L30.}
\endabstract 
\endgroup    

\fi

\subsubsection*{Acknowledgements}
{\small\footnotesize
Martin Dole\v{z}al:
Supported by the EXPRO project 20-31529X awarded by the Czech Science Foundation and by the Institute of Mathematics, Czech Academy of Sciences (RVO 67985840).\\
Jan Kol\'{a}\v{r}:
Supported by the Institute of Mathematics, Czech Academy of Sciences (RVO 67985840).\\
Janusz Morawiec:
The research activities co-financed by the funds granted under the Research Excellence Initiative of the University of Silesia in Katowice and by the University of Silesia Mathematics Department (Iterative Functional Equations and Real Analysis program).
\par
}

\ifRACSAMsubmissionSNJNL

\subsubsection*{ORCID iD}
Jan Kol\'{a}\v{r}: 0009-0009-5689-8087\\
Martin Dole\v{z}al: 0000-0001-9774-8610\\
Janusz Morawiec: 0000-0002-0310-867X

\fi
\section{Introduction}

In this paper, we continue
our research,
started in \cite{DKM2024},
aimed at
the classification and characterization of the (Euclidean) betweenness isomorphism classes
of sets in the plane; this time we focus on
a different family of sets.
Sets $A$, $B$ in the plane 
are said to be \emph{betweenness isomorphic} if there exists a bijection $f\colon A\to B$ such that, for all $x,y,z\in A$,
we have $f(x)\in[f(y),f(z)]$ if and only if $x\in[y,z]$, where $[u,v]$ denotes the line segment
\[
[u,v]=\big\{w\in\R^2:w=\lambda u+(1-\lambda)v\textnormal{ for some }\lambda\in[0,1]\big\},\quad u,v\in\R^2.
\]
Basic information on the ternary relation called betweenness, which is widely studied in connection with broadly understood geometry, can be found, e.g., in \cite{Soltan1984,Pambuccian2011}. 
The reader may also consult \cite{Vel1993}.

A common generalization of two problems posed by Wies\l aw Kubi\'{s} (see \cite[\allowbreak Problems~1.1 and 1.2]{DKM2024}) reads as follows.

\begin{question}\label{Question1}
Let $\F$ be a family consisting of certain subsets of the plane.
\begin{enumerate}
\item[{\rm (A)}] For which pairs $A,B\in\F$ does there exist a betweenness isomorphism $f\colon A\to B$?
\item[{\rm (B)}] How many betweenness isomorphism classes in $\F$ are there? How to characterize these classes?
\end{enumerate}
\end{question}

Note that Problem 3.29.3 from \cite{Vel1993} is in the spirit of \cref{Question1} since it concerns the classification of betweenness isomorphism classes of convex polytopes in Euclidean spaces.

Denote by $\A$ the family of all sets that are the union of two concentric circles in the plane and by $\mathcal B_l$ the family of all subsets of the plane of the form $C\cup F$, where $C$ is a circle and $F$ is a set of $l$ pairwise distinct points
enclosed by~$C$. 
Then \Cref{Question1} with $\F=\mathcal B_l$ reduces to~\cite[Problem 1.1]{DKM2024}, whereas with $\F=\A$ to~\cite[Problem 1.2]{DKM2024}. 

A difficulty in answering \Cref{Question1} 
is that tools developed for a given family $\F$ may be completely useless for another family. 
This is the case with the two families from Kubi\'{s}' problems.
Namely, in~\cite{DKM2024} we gave an answer to part (A) of \Cref{Question1}
for the family $\F=\mathcal B_l$ and a full answer to part (B)
for a special subfamily of $\F=\mathcal B_3$;
our main tool was a certain group action on the circle (using `reversions' about the points).
Unfortunately, the same tool may not be used in the case of the family 
$\F=\A$: the difference is that
the orbit (introduced in \cite[Section~2.3]{DKM2024}) 
of any point of the outer circle would be the whole outer circle, 
and determining the isomorphism class of the relevant
group action then
remains as the unresolved part
of the task.
Therefore, to provide an answer to~\cite[Problem 1.2]{DKM2024}
we need to develop some new tools.

Assuming that we know the answer to part (A) of \cref{Question1}, it is natural to
ask what do the betweenness isomorphisms $f\colon A\to B$ look like.
Unfortunately, it seems that not much is known.
From the results proven in~\cite{KubisMorawiecZurcher2022}, we immediately obtain at least some answers in the case when the sets $A,B$ are convex.
For example, by~\cite[Theorem~2.1]{KubisMorawiecZurcher2022}, if one (or, equivalently, both) of the sets $A,B$ is not contained in a single line, then every betweenness isomorphism $f\colon A\to B$ extends to a unique homography of the plane. 

For non-convex sets the situation can be
more complicated,
as can be seen in
\cite[Theorem 3.1]{DKM2024} for the case of the family $\mathcal B_l$.
For example,
let
$A_i=C_i\cup\{p_i\}\in\mathcal B_1$,
for $i=1,2$.
Let also
$E_i$ be
the intersection of the circle $C_i$ with an open half-plane determined by any line passing through the point $p_i$.
Then any bijection 
$h\colon E_1\to E_2$
has exactly two distinct extensions to a betweenness isomorphism $f\colon A_1\to A_2
$. 
A very simple example showing that a betweenness isomorphism of two non-convex sets $A,B\in\F$ can be very far from a "regular" function 
(as in the case above) is the family $\F=\mathcal B_0$ 
consisting of all circles in the plain; 
indeed, any bijection between two circles is a betweenness isomorphism.

Assume again that we know the answer to part (A) of \cref{Question1}. 
Clearly, every betweenness isomorphism $f\colon A\to B$ keeps the collinearity of points, i.e.\ for every $a,b,c\in A$, the points $a,b,c$ are collinear if and only if the points $f(a),f(b),f(c)$ are collinear. 
This exactly means that every betweenness isomorphism is also a \emph{collinearity isomorphism}. Therefore, we can ask for which families $\F$ both the isomorphisms coincide. 
It is not difficult to see that there are families $\F$ for which betweenness isomorphisms and collinearity isomorphisms cannot
coincide
(see \cref{exBC1} for the family $\F$ consisting of five points sets, and \cref{exBC2} for the family consisting of subsets of a line).

In the context of the results obtained, it is natural to ask for an answer to \cref{Question1} in the case where the family $\F$ consists of all sets that are the union of two non-concentric circles in the plane. 
Let us denote this family by $\An$.
We do not yet have a comprehensive answer to \cref{Question1} for $\F=\An$. However, some basic facts about the family $\An$ can be formulated easily.

Our paper concerns the family $\A$, and is organized as follows. 

In \cref{sec:P} we introduce notions and formulate basic facts needed in the next sections.

\cref{sec:results} aims to provide a complete answer to Kubi\'{s}' Problem~1.2 from \cite{DKM2024}, i.e., to give an answer to \cref{Question1} with $\F=\A$, by proving that two sets from the family $\A$ are betweenness isomorphic if and only if they have the same ratio of the radii of the two concentric circles (see \Cref{cor:betwIso}). 
In particular, we obtain that there are continuum many betweenness isomorphism classes in $\A$, and each of these classes consists exactly of all scaled translations of an arbitrary representant of the class. 
While not too much surprising, these results are in sharp contrast with the situation considered in \cite{DKM2024} in the case where $\F=\mathcal B_1$, or $\F=\mathcal B_2$, or
$\F=\mathcal B_3^{\textrm{col}} \subset \mathcal B_3$ where the three points are required to be collinear.

In \cref{sec:SIB}, we describe all betweenness isomorphisms between sets of the considered family $\A$ belonging to the same betweenness isomorphism class. 
It turns out that these are exactly the restrictions of those
scaled isometries
that map $A$ onto $B$ (see \Cref{thm:extension}). 
Recall that a \emph{scaled isometry}
(or, a \emph{similarity}) is a map
$\Phi\colon\R^2\to\R^2$ for which there exists $C>0$ such that
\begin{equation}\label{eq:scaledIso}
\|\Phi(x)-\Phi(y)\|=C\|x-y\|,\quad x,y\in\R^2.
\end{equation}
Again we see that the obtained result is in sharp contrast with the case of the family $\mathcal B_l$ considered in \cite{DKM2024}.

The goal of \cref{sec:BC} is to show that any collinearity isomorphism between two sets of the family $\A$ is also a betweenness isomorphism. Therefore, both
isomorphism notions coincide on sets of the family $\A$. 

Finally, in \cref{sec:AAn}, we give an example of a set $S\in\A$ that is not betweenness isomorphic with any set $R\in\An$ and vice versa.


\section{Preliminaries}\label{sec:P}
Throughout this paper, the symbol $\A$ is reserved for the family consisting of those sets $A\subseteq\R^2$ which are the union of two distinct concentric circles.

We denote the circle with center $c\in\R^2$ and radius $\rho\in(0,\infty)$
by $S(c,\rho)$.
We say that $y\in\R^2$ \emph{lies between $x\in\R^2$ and $z\in\R^2$} if $y=\lambda x+(1-\lambda)z$ for some $\lambda\in[0,1]$.
For $S\subseteq\R^2$ and $x,z\in S$, we put
\[
[x,z]_S=\{y\in S:y\text{ lies between $x$ and $z$}\},
\]
and
\begin{equation}\label{eq:openInterval}
(x,z)_S=\{y\in S\setminus\{x,z\}:y\text{ lies between $x$ and $z$}\}.
\end{equation}

We recall the notions of extreme points and collinear hulls from~\cite{DKM2024}.
Let $S\subseteq\R^2$.
A point $y\in S$ is said to be \emph{extreme in~$S$} if there are no $x,z\in S\setminus\{y\}$ such that $y$ lies between $x$ and $z$.
The set of all extreme points in $S$ is denoted by $\ext(S)$.
By~\cite[Lemma~2.1(i)]{DKM2024}, all betweenness isomorphisms preserve extreme points. 
A set $A\subseteq S$ is called \emph{collinearly closed in $S$} if $z\in A$ whenever $z\in S$ and there exist distinct $x,y\in A$ such that $x,y,z$ are collinear.
The \emph{collinear hull} $\chull_S(A)$ of an arbitrary set $A\subseteq S$ in $S$ is the smallest set which is collinearly closed in $S$ and which contains $A$.
By~\cite[Lemma~2.1(ii)]{DKM2024}, every betweenness isomorphism maps the collinear hull of a given set onto the collinear hull of its image.

Given a set $A$, we denote its cardinality by $\card A$.

We begin with the following lemma that will be useful later.

\begin{lemma}
\label{lem:identity}
Let\/ $S\subseteq\R^2$ be a set which intersects each line in finitely many points only.
Let $g\colon S\to S$ be a betweenness isomorphism of\/ $S$ onto itself.
Then the set\/ $W=\{s\in S:g(s)=s\}$ is collinearly closed in $S$.
\end{lemma}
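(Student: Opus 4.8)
The plan is to reduce the problem to a single line and exploit the fact that a betweenness isomorphism of a finite collinear set is forced to be monotone. Unwinding the definition of ``collinearly closed'', it suffices to fix a point $z\in S$ together with two distinct points $x,y\in W$ such that $x,y,z$ are collinear, and then to show that $g(z)=z$. Let $\ell$ be the line through $x$ and $y$; then $z\in\ell$, and by the hypothesis on $S$ the set $T:=S\cap\ell$ is finite. Since every betweenness isomorphism preserves collinearity and since $g(x)=x$, $g(y)=y$ both lie on $\ell$, for each $p\in T$ the points $x,y,g(p)$ are collinear, so $g(p)\in\ell$; hence $g(T)\subseteq T$, and as $g$ is injective and $T$ is finite, $g$ restricts to a bijection of $T$ onto itself. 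Because whether one of three given points lies between the other two does not depend on the ambient set, this restriction $h:=g|_T$ is itself a betweenness isomorphism of $T$ onto $T$.

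Next I would choose an affine parametrization of $\ell$ (pick $o\in\ell$ and a direction vector $v$, and identify the point $o+tv$ with $t\in\R$), under which $T$ becomes a finite set of reals $t_1<t_2<\dots<t_n$ and betweenness along $\ell$ becomes the usual betweenness on $\R$. The key step is then the elementary observation that any betweenness-preserving bijection $h$ of $\{t_1,\dots,t_n\}$ onto itself must be monotone: otherwise the sequence $h(t_1),\dots,h(t_n)$ would have an interior index $i$ at which $h(t_i)$ is strictly larger, or strictly smaller, than both $h(t_{i-1})$ and $h(t_{i+1})$, so $h(t_i)$ would fail to lie between $h(t_{i-1})$ and $h(t_{i+1})$ even though $t_i$ lies between $t_{i-1}$ and $t_{i+1}$, contradicting that $h$ preserves betweenness. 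An increasing self-bijection of a finite subset of $\R$ is the identity, whereas a decreasing one is the reversal $t_i\mapsto t_{n+1-i}$, which has at most one fixed point.

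Finally I would apply this to $h=g|_T$: it fixes the two distinct points $x$ and $y$, so it cannot be the order-reversing map, hence $h$ is the identity on $T$. In particular $g(z)=z$, i.e.\ $z\in W$. Since $z$ was an arbitrary element of $S$ collinear with two distinct points of $W$, this shows that $W$ is collinearly closed in $S$. The only point demanding genuine (if routine) care is the monotonicity claim for betweenness-preserving self-bijections of a finite set of reals; the remaining steps are bookkeeping, so I do not anticipate a serious obstacle. (It is worth noting that the finiteness of $T$, guaranteed by the hypothesis on $S$, is used twice: to pass from injectivity to surjectivity of $g$ on $T$, and to conclude that an increasing self-bijection fixing two points is the identity.)
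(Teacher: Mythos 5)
Your proof is correct and takes essentially the same approach as the paper: both arguments restrict $g$ to the finite set $S\cap\ell$ (which the paper identifies as the collinear hull $\chull_S(\{x,y\})$ and shows is $g$-invariant by citing a hull-preservation lemma, whereas you verify the invariance directly from preservation of collinearity) and then conclude that a betweenness self-isomorphism of a finite collinear set fixing two points must be the identity. The paper dismisses this last step with ``easily implies''; your monotonicity argument is a correct and complete way to fill it in.
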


\begin{proof}
Suppose that $x,y\in W$ are distinct.
Suppose also that $z\in S$ is such that $x,y,z$ are collinear; we must show that $z\in W$.
We have
\[
g(\chull_S(\{x,y\}))\stackrel{\text{\cite[Lemma~2.1(ii)]{DKM2024}}}{=}\chull_S(\{g(x),g(y)\})=\chull_S(\{x,y\}),
\]
which means that the restriction of $g$ to $\chull(\{x,y\})$ is a betweenness isomorphism of $\chull(\{x,y\})$ onto itself.
As the set $\chull_S(\{x,y\})$ is contained in a single line, it is finite by our assumption on $S$.
This, together with the fact that $g$ fixes both $x$ and $y$, easily implies that $g$ must be the identity on $\chull_S(\{x,y\})$.
In particular, $g(z)=z$ as we needed.
\end{proof}

Let $S=S(c,\rho)\cup S(c,\rho')$ where $c\in\R^2$ and $\rho<\rho'$.
For every $\alpha\in\R$, we define
\[
A_S(\alpha)=\Bigl\{c+\rho'(\cos\theta,\sin\theta):\theta\in\big(\alpha-\arccos\frac\rho{\rho'},\alpha+\arccos\frac\rho{\rho'}\big)\Bigr\},
\]
cf.~\Cref{fig:imageTangent}.
So each $A_S(\alpha)$ is an arc of the circle $S(c,\rho')$ of length $2\rho'\arccos\frac\rho{\rho'}$.
Moreover, straightforward calculations show that the line passing through the two endpoints of $A_S(\alpha)$ is a tangent line of the circle $S(c,\rho)$.

\begin{lemma}
\label{lem:arcs}
Let $S,R\in\A$ and suppose that $f\colon S\to R$ is a betweenness isomorphism.
Then for every $\alpha\in\R$ there is $\beta\in\R$ such that $f(A_S(\alpha))=A_R(\beta)$,
and such that $f$ maps each of the two endpoints of the arc $A_S(\alpha)$ onto an endpoint of the arc $A_R(\beta)$.
\end{lemma}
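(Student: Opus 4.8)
The plan is to exploit two features of a union of two concentric circles that betweenness isomorphisms preserve: the set of extreme points, and, for two of its points $x,z$, the cardinality of $(x,z)_S$. Write $R=S(d,\sigma)\cup S(d,\sigma')$ with $\sigma<\sigma'$; recall $S=S(c,\rho)\cup S(c,\rho')$ with $\rho<\rho'$. First I would record that $\ext(S)=S(c,\rho')$: every point of $S$ lies in the closed disc bounded by $S(c,\rho')$, so no point of $S(c,\rho')$ can lie strictly between two points of $S$ (an open chord of a disc meets the bounding circle only in its endpoints), while a point $p$ of $S(c,\rho)$ lies strictly between the two points in which $S(c,\rho')$ meets the line through $p$ and $c$. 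Since betweenness isomorphisms preserve extreme points \cite[Lemma~2.1(i)]{DKM2024}, $f$ maps $S(c,\rho')$ onto $S(d,\sigma')$ and $S(c,\rho)$ onto $S(d,\sigma)$.

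Next I would observe that for distinct $x,z\in S(c,\rho')$ the set $(x,z)_S$ equals the set of points in which the line through $x$ and $z$ meets $S(c,\rho)$: no point of $S(c,\rho')$ lies strictly between $x$ and $z$ (three distinct points of a circle are never collinear), and every point of $S(c,\rho)$ on that line lies on the chord $[x,z]$. Hence $\card (x,z)_S\le 2$, with $\card (x,z)_S=1$ if and only if the line through $x$ and $z$ is tangent to $S(c,\rho)$, in which case the unique element of $(x,z)_S$ is the point of tangency. Now fix $\alpha$, let $P_-,P_+$ be the endpoints of $A_S(\alpha)$, and put $T=c+\rho(\cos\alpha,\sin\alpha)$. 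As observed before the lemma the chord $[P_-,P_+]$ lies on a tangent line $\ell$ of $S(c,\rho)$; a short computation shows $\ell$ touches $S(c,\rho)$ at $T$ and that $A_S(\alpha)$ is precisely the part of $S(c,\rho')$ lying strictly on the side of $\ell$ not containing $c$. In particular $(P_-,P_+)_S=\{T\}$. Because $f$ preserves extreme points and the cardinalities $\card(\cdot,\cdot)_S$, the line through $f(P_-)$ and $f(P_+)$ is tangent to $S(d,\sigma)$; let $Q=d+\sigma(\cos\beta,\sin\beta)$ be the point of tangency — this defines $\beta$. Then $f(P_-)$ and $f(P_+)$ are two distinct points of $S(d,\sigma')$ on the tangent line to $S(d,\sigma)$ at $Q$, so they are exactly the two endpoints of $A_R(\beta)$; and $f(T)=Q$, since $Q$ is the unique element of $(f(P_-),f(P_+))_R=f((P_-,P_+)_S)=\{f(T)\}$. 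This already yields the assertion about endpoints.

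The crux — and the step I expect to be the main obstacle — is to distinguish, among the two open arcs into which $P_-$ and $P_+$ split $S(c,\rho')$, the one equal to $A_S(\alpha)$, using only betweenness and the points $P_-,P_+,T$ already pinned down. I claim that for $q\in S(c,\rho')\setminus\{P_-,P_+\}$ one has $q\in A_S(\alpha)$ if and only if $(q,T)_S=\emptyset$. Indeed $(q,T)_S$ can contain only points of $S(c,\rho)$ lying on the open segment $(q,T)$, since that segment lies in the open disc bounded by $S(c,\rho')$. If $q\in A_S(\alpha)$, then $q$ lies strictly on the side of $\ell$ not containing $c$, hence so does all of $(q,T)$ (its only point on $\ell$ being $T$), whereas $S(c,\rho)$ lies on $c$'s side of $\ell$; thus $(q,T)_S=\emptyset$. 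If instead $q$ lies on $c$'s side of $\ell$, then, $q$ being outside the disc bounded by $S(c,\rho)$ while $T$ is on its boundary and the segment reaches $T$ from $c$'s side (so from inside that disc just before $T$), the segment $[q,T]$ must cross $S(c,\rho)$ strictly between $q$ and $T$, so $(q,T)_S\neq\emptyset$. The identical characterisation holds in $R$ with $T$ replaced by $Q$.

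Finally I would assemble the pieces. If $q\in A_S(\alpha)$, then $f(q)\in S(d,\sigma')\setminus\{f(P_-),f(P_+)\}$ and $(f(q),Q)_R=f((q,T)_S)=\emptyset$, so $f(q)\in A_R(\beta)$; thus $f(A_S(\alpha))\subseteq A_R(\beta)$. Conversely any point of $A_R(\beta)$ is $f(q)$ for some $q\in S(c,\rho')\setminus\{P_-,P_+\}$, and $f((q,T)_S)=(f(q),Q)_R=\emptyset$ forces $(q,T)_S=\emptyset$ because $f$ is a bijection, whence $q\in A_S(\alpha)$; thus $A_R(\beta)\subseteq f(A_S(\alpha))$. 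Therefore $f(A_S(\alpha))=A_R(\beta)$, which together with the endpoint statement completes the proof.
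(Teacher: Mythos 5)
Your proof is correct and follows essentially the same strategy as the paper's: both pin down the image of the tangency point $u_S=c+\rho(\cos\alpha,\sin\alpha)$ of the inner circle and characterize $A_S(\alpha)$ as the set of outer-circle points $s$, other than the two endpoints, satisfying $(s,u_S)_S=\emptyset$. The only cosmetic difference is that you locate the endpoints and the tangency point via $\card\,(P_-,P_+)_S=1$ and then deduce $f(T)=Q$, whereas the paper defines $\beta$ directly from $f(u_S)$ (using that $f$ maps the inner circle onto the inner circle) and detects the endpoints via the cardinality of the collinear hull $\chull_S(\{s,u_S\})$.
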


\begin{proof}
Suppose that $S=S(c,\rho)\cup S(c,\rho')$,
where $c\in\R^2$ and $\rho<\rho'$, $R=S(d,\tau)\cup S(d,\tau')$
with $d\in\R^2$ and $\tau<\tau'$, and $\alpha\in\R$.
By~\cite[Lemma~2.1(i)]{DKM2024}, we have $f(\ext(S))=\ext(R)$.
Obviously, it holds that $\ext(S)= S(c,\rho')$ and $\ext(R)=S(d,\tau')$.
As $f$ is a bijection, it follows that $f(S(c,\rho))=S(d,\tau)$.
In particular,
\[
f(c+\rho(\cos\alpha,\sin\alpha))\in S(d,\tau).
\]
So there is $\beta\in\R$ (which is unique modulo $2\pi$) such that
\begin{equation}
\label{eq:u_Su_R}
f(c+\rho(\cos\alpha,\sin\alpha))=d+\tau(\cos\beta,\sin\beta).
\end{equation}

We put
\begin{equation*}
\begin{aligned}
u_S&=c+\rho(\cos\alpha,\sin\alpha),\\
v_S&=c+\rho'\Big(\cos\big(\alpha-\arccos\frac\rho{\rho'}\big),\sin\big(\alpha-\arccos\frac\rho{\rho'}\big)\Big),\\
w_S&=c+\rho'\Big(\cos\big(\alpha+\arccos\frac\rho{\rho'}\big),\sin\big(\alpha+\arccos\frac\rho{\rho'}\big)\Big),
\end{aligned}
\end{equation*}
see \Cref{fig:imageTangent}.
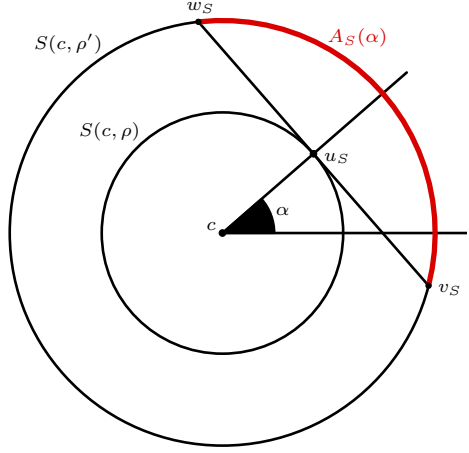
\begin{figure}
\centering
\colorlet{myredA}{red!85!black}
\begin{tikzpicture}[line cap=round,line join=round,>=triangle 45,x=1cm,y=1cm,scale=0.5]
\clip(-7,-6.2) rectangle (5.5,7);
\draw [shift={(-1,0)},line width=0.8pt,fill=black,fill opacity=0.1] (0,0) -- (0:1.382) arc (0:41.08:1.382) -- cycle;
\draw [line width=1pt] (-1,0) circle (5.656);
\draw [line width=1pt] (-1,0) circle (3.210);
\draw [line width=1pt,domain=-1.621:4.480] 
plot(\x,{(7.888-2.42*\x)/2.11});
\draw [line width=1pt,domain=-1:3.9] 
plot(\x,{(2.11+2.11*\x)/2.42});
\draw [line width=1pt,domain=-1:6.235] plot(\x,{0*\x});
\draw [shift={(-1,0)},line width=2pt,color=myredA]  plot[domain=-0.250:1.684,variable=\t]({1*5.656*cos(\t r)+0*5.656*sin(\t r)},{0*5.656*cos(\t r)+1*5.656*sin(\t r)});
\begin{scriptsize}
\draw [color=myredA](1.6,5.65) node[anchor=north west] {$A_S(\alpha)$};
\draw [fill=black] (-1,0) circle (2.5pt);
\draw[color=black] (-1.3,0.2) node {$c$};
\draw[color=black] (-5.1,5.05) node {$S(c,\rho')$};
\draw [fill=black] (1.42,2.11) circle (2.5pt);
\draw[color=black] (2.05,2) node {$u_S$};
\draw[color=black] (-4,2.7) node {$S(c,\rho)$};
\draw[color=black] (0.6,0.58) node {$\alpha$};
\draw [fill=black] (-1.640,5.620) circle (2pt);
\draw[color=black] (-1.58,6) node {$w_S$};
\draw [fill=black] (4.480,-1.4004) circle (2pt);
\draw[color=black] (5.05,-1.5) node {$v_S$};
\end{scriptsize}
\end{tikzpicture}
\caption{Position of the arc $A_S(\alpha)$ and the points $u_S,v_S,w_S$ in $S$.}
\label{fig:imageTangent}
\end{figure}
Then $v_S,w_S$ are the endpoints of the arc $A_S(\alpha)$.
Similarly, we put
\begin{equation*}
\begin{aligned}
u_R&=d+\tau(\cos\beta,\sin\beta),\\
v_R&=d+\tau'\Big(\cos\big(\beta-\arccos\frac\tau{\tau'}\big),\sin\big(\beta-\arccos\frac\tau{\tau'}\big)\Big),\\
w_R&=d+\tau'\Big(\cos\big(\beta+\arccos\frac\tau{\tau'}\big),\sin\big(\beta+\arccos\frac\tau{\tau'}\big)\Big);
\end{aligned}
\end{equation*}
then $v_R,w_R$ are the endpoints of the arc $A_R(\beta)$.

By~\eqref{eq:u_Su_R}, we have $f(u_S)=u_R$.
Further, for every $s\in S$, we have
\begin{equation}
\label{eq:cardinalities}
\begin{split}
\card\chull_R(\{f(s),u_R\}
=&\card\chull_R(\{f(s),f(u_S)\}\\
\stackrel{\textnormal{\cite[Lemma~2.1(ii)]{DKM2024}}}{=}{}&\card\chull_S(\{s,u_S\})
=\card\chull_S(\{s,u_S\}).
\end{split}
\end{equation}
It is easy to see from the picture that
\begin{equation*}
\{v_S,w_S\}=\{s\in S:\card\chull_S(\{s,u_S\})=3\}
\end{equation*}
and similarly
\begin{equation*}
\{v_R,w_R\}=\{r\in R:\card\chull_R(\{r,u_R\})=3\}.
\end{equation*}
Combining this observation with~\eqref{eq:cardinalities}, it follows that
\[
f(\{v_S,w_S\})=\{v_R,w_R\},
\]
which proves the assertion concerning the endpoints.
Further, one can easily check that
\begin{equation}
\label{eq:A_S}
A_S(\alpha)=\{
s\in\ext(S) :
\card\chull_S(\{s,u_S\})=4\text{ and }(s,u_S)_S=\emptyset\},
\end{equation}
and similarly
\begin{equation}
\label{eq:A_R}
A_R(\beta)=\{
r\in\ext(R) :
\card\chull_R(\{r,u_R\})=4\text{ and }(r,u_R)_R=\emptyset\}.
\end{equation}
As $f(\ext(S))=\ext(R)$ and $f(u_S)=u_R$, we have by~\eqref{eq:A_R} that
\begin{equation}
\label{eq:A_R2}
\begin{split}
A_R(\beta)=\{f(s):\quad &s\in\ext(S)
\text{ and }\card\chull_R(\{f(s),u_R\})=4\\
&\text{and }(f(s),u_R)_R =\emptyset\}.
\end{split}
\end{equation}
By comparing~\eqref{eq:A_S} and~\eqref{eq:A_R2}, 
we see that in order
to conclude that $f(A_S(\alpha))=A_R(\beta)$, it only remains to
apply equation~\eqref{eq:cardinalities} and the obvious fact that
\[
(s_1,s_2)_S=\emptyset\iff(f(s_1),f(s_2))_R=\emptyset,\quad s_1,s_2\in S.
\qedhere
\]
\end{proof}

In~\Cref{sec:results}, we define an invariant $M(\cdot)$ of sets from the collection $\A$ with respect to betweenness isomorphisms.
This invariant will be used in~\Cref{cor:betwIso} to determine which pairs of sets from $\A$ are betweenness isomorphic.
But before doing so, we introduce a simpler invariant which will hopefully provide some insight to the ideas described later.

Let $S\in\A$.
We define $m(S)$ to be the smallest natural number $n$ such that there exist $\alpha_1,\ldots,\alpha_n\in\R$ with
\[
\ext(S)=\bigcup_{i=1}^nA_S(\alpha_i).
\]

\begin{lemma}
\label{lem:invariantEasy}
$m(\cdot)$ is an invariant with respect to betweenness isomorphisms.
\end{lemma}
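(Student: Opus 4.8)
The plan is to deduce the statement directly from \cref{lem:arcs}. Fix sets $S,R\in\A$ and a betweenness isomorphism $f\colon S\to R$; I must show that $m(S)=m(R)$. Put $n=m(S)$ and choose $\alpha_1,\dots,\alpha_n\in\R$ with $\ext(S)=\bigcup_{i=1}^n A_S(\alpha_i)$.

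First I would recall that, by \cite[Lemma~2.1(i)]{DKM2024}, $f$ maps $\ext(S)$ bijectively onto $\ext(R)$ (here $\ext(S)$ and $\ext(R)$ are the respective outer circles, as already observed in the proof of \cref{lem:arcs}). Since $f$ is a bijection, applying it to the displayed union gives $\ext(R)=\bigcup_{i=1}^n f(A_S(\alpha_i))$. By \cref{lem:arcs}, for each $i\in\{1,\dots,n\}$ there is $\beta_i\in\R$ with $f(A_S(\alpha_i))=A_R(\beta_i)$, hence $\ext(R)=\bigcup_{i=1}^n A_R(\beta_i)$. The minimality in the definition of $m(R)$ then yields $m(R)\le n=m(S)$.

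For the opposite inequality I would note that $f^{-1}\colon R\to S$ is again a betweenness isomorphism, because the defining condition is symmetric in the ``if and only if''; running the previous argument with the roles of $S$ and $R$ interchanged gives $m(S)\le m(R)$, and the two inequalities together prove $m(S)=m(R)$.

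I do not expect any real obstacle here: essentially all the work is already contained in \cref{lem:arcs}. The only steps requiring a moment of care are the identification of $\ext(S)$ and $\ext(R)$ with the outer circles (so that \cite[Lemma~2.1(i)]{DKM2024} applies and so that the arcs $A_S(\cdot)$, $A_R(\cdot)$ are the relevant objects) and the observation that a bijection carries a \emph{minimal} cover of $\ext(S)$ by $n$ arcs $A_S(\alpha_i)$ to a cover of $\ext(R)$ by exactly $n$ arcs of the form $A_R(\beta_i)$, so that $m$ can neither drop nor rise under $f$.
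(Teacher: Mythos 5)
Your proof is correct and follows essentially the same route as the paper's: apply \cref{lem:arcs} to transport a cover of $\ext(S)$ by arcs $A_S(\alpha_i)$ to a cover of $\ext(R)$ by arcs $A_R(\beta_i)$, deduce $m(R)\le m(S)$, and conclude by the symmetry of the situation (via $f^{-1}$). No gaps.
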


\begin{proof}
Suppose that $S,R\in\A$ are betweenness isomorphic, i.e., there exists a betweenness isomorphism $f\colon S\to R$.
Suppose that $\alpha_1,\ldots,\alpha_n\in\R$ are such that $\ext(S)=\bigcup_{i=1}^nA_S(\alpha_i)$.
By~\Cref{lem:arcs}, there are $\beta_1,\ldots,\beta_n\in\R$ such that
\[
f(A_S(\alpha_i))=A_R(\beta_i),\quad i=1,\ldots,n.
\]
Then
\[
\ext(R)=f(\ext(S))=f\Big(\bigcup_{i=1}^nA_S(\alpha_i)\Big)=\bigcup_{i=1}^nA_R(\beta_i).
\]
By the definition of $m(\cdot)$, it follows that $m(R)\le m(S)$.

As the situation is completely symmetric, we must also have $m(S)\le m(R)$, and so $m(S)=m(R)$.
\end{proof}

\begin{example}
Let $S=S(0,\rho)\cup S(0,1)$ and $R=S(0,\tau)\cup S(0,1)$ where $\rho<\frac12$ and $\sqrt{\frac12}\le\tau<1$.
Then the sets $S$ and $R$ are not betweenness isomorphic.
\end{example}

\begin{proof}
Each of the arcs $A_S(\alpha)$, $\alpha\in\R$, has length
\[
2\arccos\rho>2\arccos\frac12=\frac23\pi,
\]
and so the set $\ext(S)=S(0,1)$ can be covered by three such arcs.
Consequently, $m(S)\le3$ (in fact, $m(S)=3$).

On the other hand, each of the arcs $A_R(\beta)$, $\beta\in\R$, has length
\[
2\arccos\tau\le2\arccos\sqrt{\frac12}=\frac12\pi,
\]
and we need at least $4$ such arcs to cover the set $\ext(R)=S(0,1)$.
So $m(R)\ge4$.

As $m(S)\neq m(R)$, we conclude by \Cref{lem:invariantEasy} that $S$ and $R$ cannot be betweenness isomorphic.
\end{proof}


\section{Answer to \texorpdfstring{\cref{Question1}}{Question \ref{Question1}} for
\(\F=\A\)}\label{sec:results}

For a natural number $k$, we say that a collection $\mathcal O$ of sets is a $k$-cover of a set $S$ if $S=\bigcup\mathcal O$ and every $s\in S$ belongs to at least $k$ elements of $\mathcal O$.

For $S\in\A$, we define $M(S)$ by
\begin{equation*}
\begin{split}
M(S)=\inf\Big\{\frac nk:\quad&n,k\in\N\text{ are such that there exist }\alpha_1,\ldots,\alpha_n\in\R\\
&\text{such that $\big\{A_S(\alpha_1),\ldots,A_S(\alpha_n)\big\}$ is a $k$-cover of $\ext(S)$}\Big\}.
\end{split}
\end{equation*}

\begin{lemma}
\label{lem:invariant}
$M(\cdot)$ is an invariant with respect to betweenness isomorphisms.
\end{lemma}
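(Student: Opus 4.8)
The plan is to mimic the proof of \Cref{lem:invariantEasy}, exploiting the fact that \Cref{lem:arcs} establishes a correspondence between the arcs $A_S(\alpha)$ and arcs $A_R(\beta)$ under a betweenness isomorphism. Suppose $S,R\in\A$ are betweenness isomorphic via $f\colon S\to R$. The quantity $M(S)$ is an infimum over pairs $(n,k)$ for which some $n$-tuple $\alpha_1,\dots,\alpha_n$ produces a $k$-cover $\{A_S(\alpha_1),\dots,A_S(\alpha_n)\}$ of $\ext(S)$; I want to show that exactly the same pairs $(n,k)$ are admissible for $R$, whence the two infima agree.

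First I would fix a pair $(n,k)$ admissible for $S$, together with $\alpha_1,\dots,\alpha_n\in\R$ such that $\{A_S(\alpha_1),\dots,A_S(\alpha_n)\}$ is a $k$-cover of $\ext(S)$. By \Cref{lem:arcs}, for each $i$ there is $\beta_i\in\R$ with $f(A_S(\alpha_i))=A_R(\beta_i)$. I would then check that $\{A_R(\beta_1),\dots,A_R(\beta_n)\}$ is a $k$-cover of $\ext(R)$. The covering property $\ext(R)=\bigcup_{i=1}^n A_R(\beta_i)$ follows exactly as in \Cref{lem:invariantEasy}: since $f(\ext(S))=\ext(R)$ (by \cite[Lemma~2.1(i)]{DKM2024}) and $f$ is a bijection, applying $f$ to $\ext(S)=\bigcup_i A_S(\alpha_i)$ gives the claim. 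For the multiplicity, take any $r\in\ext(R)$ and write $r=f(s)$ with $s\in\ext(S)$. Because $\{A_S(\alpha_i)\}$ is a $k$-cover, $s$ lies in at least $k$ of the sets $A_S(\alpha_i)$; since $f$ is injective and $f(A_S(\alpha_i))=A_R(\beta_i)$, the point $r=f(s)$ lies in at least $k$ of the sets $A_R(\beta_i)$. Hence $\{A_R(\beta_1),\dots,A_R(\beta_n)\}$ is indeed a $k$-cover of $\ext(R)$, so $(n,k)$ is admissible for $R$ and therefore $M(R)\le n/k$. Taking the infimum over all pairs $(n,k)$ admissible for $S$ yields $M(R)\le M(S)$.

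By symmetry — using the betweenness isomorphism $f^{-1}\colon R\to S$ and the same argument — we obtain $M(S)\le M(R)$, and hence $M(S)=M(R)$, completing the proof.

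The argument is essentially routine once \Cref{lem:arcs} is in hand; the only point that needs a little care (and is the main thing to get right) is the multiplicity bookkeeping for the $k$-cover, i.e.\ verifying that injectivity of $f$ together with the identities $f(A_S(\alpha_i))=A_R(\beta_i)$ transfers the ``belongs to at least $k$ of the arcs'' property from a point $s\in\ext(S)$ to its image $f(s)\in\ext(R)$. No genuine obstacle is anticipated beyond this.
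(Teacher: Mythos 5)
Your proposal is correct and follows essentially the same route as the paper: apply \Cref{lem:arcs} to transport each arc $A_S(\alpha_i)$ to an arc $A_R(\beta_i)$, use $f(\ext(S))=\ext(R)$ and bijectivity to transfer the $k$-cover property pointwise, conclude $M(R)\le M(S)$, and finish by symmetry. The multiplicity bookkeeping you single out is exactly the step the paper carries out (via $f^{-1}(r)\in\ext(S)$), so there is nothing further to add.
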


\begin{proof}
Suppose that $S,R\in\A$ are betweenness isomorphic, i.e., there exists a betweenness isomorphism $f\colon S\to R$.
Suppose that $\alpha_1,\ldots,\alpha_n\in\R$ are such that the collection $\{A_S(\alpha_1),\ldots,A_S(\alpha_n)\}$ is a $k$-cover of $\ext(S)$.
By~\Cref{lem:arcs}, there are $\beta_1,\ldots,\beta_n\in\R$ such that
\[
f(A_S(\alpha_i))=A_R(\beta_i),\quad i=1,\ldots,n.
\]
We fix $r\in\ext(R)$.
As $f$ is a bijection and $f(\ext(S))=\ext(R)$, we have $f^{-1}(r)\in\ext(S)$.
So there are at least $k$ distinct indices $i\in\{1,\ldots,n\}$ such that $f^{-1}(r)\in A_S(\alpha_i)$.
For each such index $i$, we then have $r\in A_R(\beta_i)$.
As this holds for every $r\in\ext(R)$, the collection $\{A_R(\beta_1),\ldots,A_R(\beta_n)\}$ is a $k$-cover of $\ext(R)$.
By the definition of $M(\cdot)$, it follows that $M(R)\le M(S)$.

As the situation is completely symmetric, we must also have $M(S)\le M(R)$, and so $M(S)=M(R)$.
\end{proof}

As we will see in the next lemma, for a given $S\in\A$, $M(S)$ equals the ratio of the circumference of the bigger circle and the length of each of the arcs $A_S(\alpha)$, $\alpha\in\R$.

\begin{lemma}
\label{lem:valueOfM}
Let $S=S(c,\rho)\cup S(c,\rho')$ where $c\in\R^2$ and $\rho<\rho'$.
Then
\[
M(S)=\frac{\pi}{\arccos\frac\rho{\rho'}}.
\]
\end{lemma}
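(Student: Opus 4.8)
I want to show that $M(S)=\pi/\arccos\frac\rho{\rho'}$ for $S=S(c,\rho)\cup S(c,\rho')$. Write $\gamma=\arccos\frac\rho{\rho'}\in(0,\pi/2]$, so that each arc $A_S(\alpha)$ is an open arc of the outer circle $S(c,\rho')$ of angular length $2\gamma$, centred at the point $c+\rho'(\cos\alpha,\sin\alpha)$. (All statements about covering $\ext(S)=S(c,\rho')$ can be translated to covering the parameter circle $\R/2\pi\Z$ by open arcs of length $2\gamma$ — this is just the obvious identification $\alpha\mapsto c+\rho'(\cos\alpha,\sin\alpha)$, and I would note at the outset that the center $c$ and radius $\rho'$ play no role, only the ratio $\rho/\rho'$ does.) So the task reduces to a purely combinatorial/measure-theoretic covering statement on the circle: if open arcs of length $2\gamma$ form a $k$-cover of $\R/2\pi\Z$ using $n$ of them, then $n/k\ge \pi/\gamma$, and this is sharp.

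**Key steps.**
\emph{Lower bound $M(S)\ge \pi/\gamma$, i.e.\ $n\ge 2\gamma k/(2\pi)$ whenever $\{A_S(\alpha_1),\dots,A_S(\alpha_n)\}$ is a $k$-cover of $\ext(S)$.} This is a counting-with-multiplicity argument: integrate the covering multiplicity function $\varphi(x)=\#\{i: x\in A_S(\alpha_i)\}$ over the outer circle. On one hand $\int \varphi = \sum_{i=1}^n \operatorname{length}(A_S(\alpha_i)) = 2n\rho'\gamma$; on the other hand $\varphi(x)\ge k$ for every $x$ (being in a $k$-cover is a pointwise condition, and the finitely many endpoints that are omitted form a null set, so this is fine for the integral), hence $\int\varphi \ge k\cdot 2\pi\rho'$. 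Comparing gives $2n\rho'\gamma \ge 2\pi\rho' k$, i.e.\ $n/k \ge \pi/\gamma$. Taking the infimum over all admissible $(n,k)$ yields $M(S)\ge \pi/\gamma$.

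\emph{Upper bound $M(S)\le \pi/\gamma$.} Here I must exhibit, for suitable $(n,k)$, a $k$-cover by $n$ arcs with $n/k$ arbitrarily close to $\pi/\gamma$. The clean choice: place $n$ arc-centres at the equally spaced parameters $\alpha_i = 2\pi i/n$, $i=1,\dots,n$. Each point $x\in\R/2\pi\Z$ then lies in $A_S(\alpha_i)$ precisely when $\alpha_i$ is within (open) distance $\gamma$ of $x$ along the circle; the number of lattice points $\{2\pi i/n\}$ in an open arc of length $2\gamma$ is at least $\lfloor 2\gamma n/(2\pi)\rfloor = \lfloor \gamma n/\pi\rfloor$ (an interval of length $2\gamma$ contains at least $\lfloor 2\gamma/(2\pi/n)\rfloor$ points of a lattice of spacing $2\pi/n$; I'd double-check the edge case where an endpoint hits a lattice point, which only helps or is harmless since we can perturb the $\alpha_i$ slightly). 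So setting $k=\lfloor \gamma n/\pi\rfloor$ gives a genuine $k$-cover, provided $k\ge 1$, i.e.\ $n$ large enough. Then $n/k = n/\lfloor \gamma n/\pi\rfloor \to \pi/\gamma$ as $n\to\infty$, so the infimum defining $M(S)$ is $\le \pi/\gamma$.

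**Main obstacle.** The only delicate point is the bookkeeping in the upper bound: making sure the equally-spaced arcs really do form a $k$-cover with $k=\lfloor\gamma n/\pi\rfloor$, handling the open-versus-closed endpoints of the arcs $A_S(\alpha)$ (their definition uses an open interval in $\theta$), and confirming $k\ge 1$ for large $n$ so that $n/k$ is defined. A tiny generic perturbation of the centres $\alpha_i$ removes any coincidence between an arc endpoint and a lattice point, after which the count $\#\{i: x\in A_S(\alpha_i)\}\ge\lfloor\gamma n/\pi\rfloor$ holds for all $x$ simultaneously; I'd mention that explicitly. Everything else (the translation to the parameter circle, the integral computation) is routine. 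Combining the two bounds gives $M(S)=\pi/\gamma=\pi/\arccos\frac\rho{\rho'}$.
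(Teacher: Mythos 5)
Your proof is correct and takes essentially the same route as the paper: the lower bound is the identical integrate-the-covering-multiplicity argument, and the upper bound again uses equally spaced arcs (the paper spaces the centres $2\arccos\frac\rho{\rho'}-\varepsilon$ apart and lets the sequence wind $k$ times around the circle, which is your construction in different clothing). The only loose end is the one you flagged yourself: when $\gamma n/\pi\in\Z$ a worst-placed point may lie in only $\gamma n/\pi-1$ of the open arcs, so the safe choice is $k=\lceil\gamma n/\pi\rceil-1$ (perturbation does not fully restore $\lfloor\gamma n/\pi\rfloor$), but since $n/k\to\pi/\gamma$ all the same, the conclusion is unaffected.
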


\begin{proof}
Fix $0<\varepsilon<2\arccos\frac\rho{\rho'}$.
For every $i\in\N$, we define
\[
\alpha_i=i\big(2\arccos\frac\rho{\rho'}-\varepsilon\big).
\]
Then the intersection of the arcs $A_S(\alpha_i)$ and $A_S(\alpha_{i+1})$ is an arc of length $\rho'\varepsilon$, $i\in\N$.
Moreover, if $n,k\in\N$ are such that
\[
\alpha_n=n\big(2\arccos\frac\rho{\rho'}-\varepsilon\big)\ge2k\pi,
\]
then the collection $\{A_S(\alpha_1),\ldots,A_S(\alpha_n)\}$ is a $k$-cover of $S(c,\rho')=\ext(S)$.
This provides the estimate
\begin{equation*}
M(S)\le
\inf\Big\{\frac nk:\alpha_n\ge2k\pi \Big\}
=\inf\Big\{\frac nk\ge\frac{
2
\pi }{
2
 \arccos\frac\rho{\rho'}-\varepsilon}\Big\}
=\frac{
2
\pi }{
2
 \arccos\frac\rho{\rho'}-\varepsilon}.
\end{equation*}
Since this holds for every
$0<\varepsilon<2\arccos\frac\rho{\rho'}$,
we conclude that $M(S)\le\frac{\pi}{\arccos\frac\rho{\rho'}}$.

To prove the opposite inequality, suppose that $\alpha_1,\ldots,\alpha_n\in\R$ are such that the collection $\{A_S(\alpha_1),\ldots,A_S(\alpha_n)\}$ is a $k$-cover of $\ext(S)=S(c,\rho')$.
Then, if we denote by $\lambdaone$ the $1$-dimensional Lebesgue measure on the circle $S(c,\rho')$, we have
\begin{equation}
\begin{split}
2n\rho'\arccos\frac\rho{\rho'}=&\sum_{i=1}^n\lambdaone(A_S(\alpha_i))
=\sum_{i=1}^n\int_{S(c,\rho')}\chara_{A_S(\alpha_i)}\, d\lambdaone\\
=&\int_{S(c,\rho')}\sum_{i=1}^n\chara_{A_S(\alpha_i)}\,d\lambdaone
\ge k\lambdaone(S(c,\rho'))
=2k\pi\rho',
\end{split}
\end{equation}
and consequently
\[
\frac nk\ge\frac{\pi}{\arccos\frac\rho{\rho'}}.
\]
The inequality $M(S)\ge\frac{\pi}{\arccos\frac\rho{\rho'}}$ follows by taking the infimum over all choices of $\alpha_1,\ldots,\alpha_n$ as above.
\end{proof}

\begin{corollary}
\label{cor:betwIso}
Let $S=S(c,\rho)\cup S(c,\rho')$ and $R=S(d,\tau)\cup S(d,\tau')$ where $c,d\in\R^2$, $\rho<\rho'$ and $\tau<\tau'$.
Then the sets $S$, $R$ are betweenness isomorphic if and only if
\begin{equation}
\label{eq:ratioEquality}
\frac\rho{\rho'}=\frac\tau{\tau'}.
\end{equation}
\end{corollary}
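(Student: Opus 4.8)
The plan is to derive \Cref{cor:betwIso} as a straightforward consequence of the invariant $M(\cdot)$ computed in \Cref{lem:valueOfM}, together with the elementary observation that similarities act on sets in $\A$ in the obvious way.

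First I would prove the ``if'' direction. Assume \eqref{eq:ratioEquality} holds, and set $C=\tau'/\rho'$. The map $\Phi(x)=d+C(x-c)$ is a scaled isometry of the plane with scaling constant $C$; it sends $S(c,\rho')$ onto $S(d,C\rho')=S(d,\tau')$ and $S(c,\rho)$ onto $S(d,C\rho)$. Now $C\rho=\frac{\tau'}{\rho'}\rho=\tau'\cdot\frac\rho{\rho'}=\tau'\cdot\frac\tau{\tau'}=\tau$ by \eqref{eq:ratioEquality}, so $\Phi(S)=S(d,\tau)\cup S(d,\tau')=R$. Since a scaled isometry maps segments onto segments, the restriction $\Phi|_S\colon S\to R$ is a bijection preserving betweenness in both directions, hence $S$ and $R$ are betweenness isomorphic.

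For the ``only if'' direction, suppose $S$ and $R$ are betweenness isomorphic. By \Cref{lem:invariant}, $M(S)=M(R)$, and by \Cref{lem:valueOfM} this reads
\[
\frac{\pi}{\arccos\frac\rho{\rho'}}=\frac{\pi}{\arccos\frac\tau{\tau'}},
\]
so $\arccos\frac\rho{\rho'}=\arccos\frac\tau{\tau'}$. Since $\arccos$ is injective on $[-1,1]$ (in particular on $(0,1)$, where both $\frac\rho{\rho'}$ and $\frac\tau{\tau'}$ lie because $0<\rho<\rho'$ and $0<\tau<\tau'$), we conclude $\frac\rho{\rho'}=\frac\tau{\tau'}$, which is \eqref{eq:ratioEquality}.

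There is essentially no obstacle here: all the work has already been done in \Cref{lem:arcs}, \Cref{lem:invariant} and \Cref{lem:valueOfM}. The only point requiring a moment's care is making sure the ratios lie in the domain where $\arccos$ is injective, which is immediate from $\rho<\rho'$ and $\tau<\tau'$; everything else is a one-line computation for the similarity $\Phi$ and an application of the already-established invariance of $M(\cdot)$.
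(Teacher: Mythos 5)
Your proof is correct and follows essentially the same route as the paper: the ``only if'' direction via the invariance of $M(\cdot)$ (\Cref{lem:invariant}) combined with its explicit value from \Cref{lem:valueOfM} and injectivity of $\arccos$, and the ``if'' direction via the similarity $x\mapsto d+\frac{\tau'}{\rho'}(x-c)$, which is exactly the paper's composition $T_d\circ M_{\tau'/\rho'}\circ T_{-c}$. No gaps.
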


\begin{proof}
Suppose first that the sets $S,R$ are betweenness isomorphic.
By \Cref{lem:invariant}, we have $M(S)=M(R)$.
Then~\eqref{eq:ratioEquality} immediately follows by \Cref{lem:valueOfM}.

Now suppose that~\eqref{eq:ratioEquality} holds true.
For every $C>0$ and every $u\in\R^2$, let $M_C\colon\R^2\to\R^2$ and $T_u\colon\R^2\to\R^2$ be the maps given by
\begin{equation}
\label{def:rescalingTranslation}
M_C(x)=Cx\quad\text{and}\quad T_u(x)=x+u,\quad x\in\R^2.
\end{equation}
Obviously, all these maps are betweenness isomorphisms of $\R^2$ onto $\R^2$.
We put
\[
f=T_d\circ M_{\frac{\tau'}{\rho'}}\circ T_{-c}.
\]
Then $f$, being a composition of betweenness isomorphisms, is also a betweenness isomorphism (of $\R^2$ onto $\R^2$).
Moreover, we have $f(S(c,\rho'))=S(d,\tau')$ and
\[
f(S(c,\rho))=S\Big(d,\frac{\rho\tau'}{\rho'}\Big)\stackrel{\eqref{eq:ratioEquality}}{=}S(d,\tau).
\]
So $f(S)=R$, and the restriction of $f$ to $S$ is a betweenness isomorphism from $S$ onto $R$.
\end{proof}


\section{The betweenness isomorphisms in \(\A\)}\label{sec:SIB}

By the proof of 
\cref{cor:betwIso},
whenever two sets from $\A$ are betweenness isomorphic, the betweenness isomorphism between them can be chosen as (the restriction of) the composition of two translations and a 
rescaling.
There exist more betweenness isomorphisms of sets from $\A$.
Indeed, fix $\rho'>\rho$ and put $S=S((0,0),\rho)\cup S((0,0),\rho')\in\A$.
Then any rotation (restricted to $S$) centered at the origin is a betweenness isomorphism of $S$ onto itself.
Similarly, any reflection (restricted to $S$) through a line passing through the origin is a betweenness isomorphism of $S$ onto itself.

Note that all the transformations of the plane mentioned above (i.e., translations, 
rescalings, rotations and reflections) are special cases of a
scaled isometry of the plane (that is, of a map $\Phi\colon\R^2\to\R^2$ for which there is $C>0$ with~\eqref{eq:scaledIso}).
In fact, every scaled isometry can be expressed as
a composition of an isometry and a 
rescaling (see \cite[5.51 and 5.61]{Coxeter1969}), 
and every isometry is well known to be a composition of a translation, a rotation and, possibly, of
a reflection (see \cite[3.51]{Coxeter1969}).
We also note that each
scaled isometry of the plane is a betweenness isomorphism of the plane onto itself, because the Euclidean betweenness (as a ternary relation) can be characterized using distances in an obvious way.

In the next theorem, we will show that every betweenness isomorphism $f$ between sets from $\A$ extends to a unique
scaled isometry of the plane.

\begin{theorem}
\label{thm:extension}
Let $S,R\in\A$ and suppose that $f\colon S\to R$ is a betweenness isomorphism.
Then $f$ extends to a unique
scaled isometry $\Phi\colon\R^2\to\R^2$.
\end{theorem}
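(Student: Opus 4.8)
The plan is to reconstruct the scaled isometry $\Phi$ from the combinatorial data that $f$ is already known to preserve, namely the family of arcs $A_S(\alpha)$ and their endpoints, as described in \Cref{lem:arcs}. Write $S=S(c,\rho)\cup S(c,\rho')$ and $R=S(d,\tau)\cup S(d,\tau')$ with $\rho<\rho'$ and $\tau<\tau'$; by \Cref{cor:betwIso} we have $\rho/\rho'=\tau/\tau'$, so the natural scaling factor is $C=\tau'/\rho'$. After composing $f$ on the right with a translation sending $d$ to $c$, then with the rescaling $M_{\rho'/\tau'}$, and on the left with $T_{-c}$, we may assume without loss of generality that $c=d=0$ and $\rho=\tau$, $\rho'=\tau'$; that is, $f$ is a betweenness isomorphism of the fixed set $S=S(0,\rho)\cup S(0,\rho')$ onto itself, and it now suffices to prove that such an $f$ extends to a (linear) isometry, i.e.\ to an element of $O(2)$.

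First I would pin down $f$ on the inner circle. By \Cref{lem:arcs}, for every $\alpha$ there is $\beta$ with $f(A_S(\alpha))=A_S(\beta)$; the arc $A_S(\alpha)$ is the open arc of $S(0,\rho')$ whose endpoints lie on the tangent line to $S(0,\rho)$ at the point $u_S=\rho(\cos\alpha,\sin\alpha)$, and the midpoint of the (closed) arc $\overline{A_S(\alpha)}$ is precisely the point of $S(0,\rho')$ antipodal, through the center, to $u_S$ — equivalently $A_S(\alpha)$ determines $\alpha$ uniquely modulo $2\pi$. Since \eqref{eq:u_Su_R} in the proof of \Cref{lem:arcs} gives exactly $f(\rho(\cos\alpha,\sin\alpha))=\rho(\cos\beta,\sin\beta)$ with the same $\beta$, we get a well-defined map $\varphi\colon\R/2\pi\Z\to\R/2\pi\Z$ with $f(\rho(\cos\alpha,\sin\alpha))=\rho(\cos\varphi(\alpha),\sin\varphi(\alpha))$, and $f$ restricted to $S(0,\rho')$ carries the endpoint pair $\{v_S,w_S\}$ of $A_S(\alpha)$ to the endpoint pair of $A_S(\varphi(\alpha))$.

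The heart of the argument is to show $\varphi$ is a rotation or reflection of the circle, i.e.\ $\varphi(\alpha)=\pm\alpha+\gamma$ for a fixed $\gamma$. The key is that the endpoint pairs $\{v_S,w_S\}$ of the arcs $A_S(\alpha)$, as $\alpha$ varies, encode the angular metric: two inner points $u_S(\alpha_1),u_S(\alpha_2)$ are at angular distance exactly $2\arccos(\rho/\rho')$ iff an endpoint of $A_S(\alpha_1)$ coincides with an endpoint of $A_S(\alpha_2)$ — a purely betweenness-invariant condition since $f$ preserves both the arcs and their endpoints. Thus $f$ preserves the relation ``$\alpha_1$ and $\alpha_2$ differ by $\pm 2\arccos(\rho/\rho')$ mod $2\pi$'' on the inner circle. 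Iterating, $\varphi$ maps the orbit of any $\alpha$ under the group generated by $\alpha\mapsto\alpha+2\arccos(\rho/\rho')$ onto the corresponding orbit of $\varphi(\alpha)$, preserving adjacency; if $\arccos(\rho/\rho')/\pi$ is irrational this orbit is dense and a short argument (an order-preserving-or-reversing bijection of a dense subset of the circle respecting a fixed ``step'') forces $\varphi$ to be an affine map $\alpha\mapsto\pm\alpha+\gamma$ by continuity, while the rational case needs a separate finite combinatorial check — and I expect this case analysis, especially making the rational (periodic-orbit) case watertight, to be the main obstacle. Once $\varphi(\alpha)=\pm\alpha+\gamma$ is established, the map $\Phi\in O(2)$ that is rotation by $\gamma$ (if the sign is $+$) or reflection in the line at angle $\gamma/2$ (if the sign is $-$) agrees with $f$ on $S(0,\rho)$ and hence, being a bijection preserving all the arcs $A_S(\alpha)$ together with their endpoints, agrees with $f$ on $S(0,\rho')$ as well; undoing the initial normalization gives the desired scaled isometry. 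Uniqueness is immediate: a scaled isometry of the plane is determined by its values on any three non-collinear points, and $S$ (indeed either circle alone) contains such triples.
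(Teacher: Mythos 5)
Your skeleton follows the paper's proof up to the point where the real work begins, and the two steps you leave vague are exactly the ones the paper has to labor over. The decisive gap is the rational case, which you explicitly defer: when $\arccos(\rho/\rho')$ is a rational multiple of $\pi$, every orbit of the step map $\alpha\mapsto\alpha+2\arccos(\rho/\rho')$ is \emph{finite}, so the arc--endpoint adjacency relation only determines $\varphi$ up to a sign and a shift on each finite orbit separately, and there are uncountably many orbits with nothing in your argument to couple them. This is not a ``finite combinatorial check''; it requires new geometric input. The paper supplies it by leaving the outer circle: after normalizing so that $f$ fixes the two endpoints $a,b$ of one arc, it studies the fixed-point set $W=\{s\in S:f(s)=s\}$, proves $W$ is collinearly closed (\Cref{lem:identity}), deduces from a parity argument on the least $j$ with $j\arccos r\in\pi\Z$ that $P(\alpha)\in W$ forces $P(\alpha+\pi)\in W$ (in the odd case by locating the fixed inner-circle point strictly between $P(\alpha+l\pi+\arccos r)$ and $P(\alpha+l\pi-\arccos r)$), hence also $rP(\alpha)\in W$, and finally destroys any maximal arc disjoint from $W$ by intersecting it with the line through $P(\alpha+\pi)$ and $rP(\alpha+\gamma)$. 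None of this is recoverable from the endpoint combinatorics alone.

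A second, smaller but still genuine gap: in the irrational case you conclude that $\varphi$ is affine ``by continuity'', but continuity of $\varphi$ (equivalently of $f$) is nowhere established -- a betweenness isomorphism is a priori just a bijection, and without continuity you only learn that $f$ agrees with a rotation or reflection on a dense orbit. The paper proves continuity of $f$ on the outer circle by showing that the sets $A_S(\alpha_1)\cap A_S(\alpha_2)$ form a base of its topology and are carried to open sets (then inverts using compactness), and needs a separate construction ($\psi\circ\theta$) for continuity on the inner circle; continuity is then what lets density of the fixed-point set yield $W=S$. Your normalization step and your uniqueness argument (three non-collinear points) are fine.
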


\begin{proof}
The uniqueness is very easy, and so we prove only the existence.

Suppose that $S=S(c,\rho)\cup S(c,\rho')$ and $R=S(d,\tau)\cup S(d,\tau')$ where $c,d\in\R^2$, $\rho<\rho'$ and $\tau<\tau'$.
By \Cref{cor:betwIso}, we have
\[
r:=\frac\rho{\rho'}=\frac\tau{\tau'}.
\]
For every $C>0$ and every $u\in\R^2$, let $M_C\colon\R^2\to\R^2$ and $T_u\colon\R^2\to\R^2$ be the maps given by~\eqref{def:rescalingTranslation}.
Then $M_{\frac1{\rho'}}\circ T_{-c}$, resp. $M_{\frac1{\tau'}}\circ T_{-d}$, is a
scaled isometry (in particular, a betweenness isomorphism) of the plane which maps $S$, resp. $R$, onto $S(0,r)\cup S(0,1)$.
So the map
\begin{equation}
\label{def:h}
h:=(M_{\frac1{\tau'}}\circ T_{-d})\circ f\circ\big((M_{\frac1{\rho'}}\circ T_{-c})^{-1}|_{S(0,r)\cup S(0,1)}\big)
\end{equation}
is a betweenness isomorphism of $S((0,0),r)\cup S((0,0),1)$ onto itself.
If we prove that $h$ extends to a
scaled isometry $H$ of the plane then, by~\eqref{def:h}, $f$ extends to the 
scaled isometry
\[
(M_{\frac1{\tau'}}\circ T_{-d})^{-1}\circ H\circ(M_{\frac1{\rho'}}\circ T_{-c})
\]
of the plane.
Thus, without loss of generality, we can assume that $S=R=S((0,0),r)\cup S((0,0),1)$ for some $r\in(0,1)$.

By~\cite[Lemma~2.1(i)]{DKM2024}, $f$ preserves extreme points, that is, $f$ maps $S((0,0),1)$ bijectively onto itself.
We will show that the restriction of $f$ to $S((0,0),1)$ is continuous.
By \Cref{lem:arcs}, $f$ maps each $A_S(\alpha)$ (which is an open arc of the circle $S((0,0),1)$ of length $2\arccos r$), $\alpha\in\R$, onto some $A_R(\beta)$, $\beta\in\R$.
Every open arc of the circle $S((0,0),1)$ of length at most $2\arccos r$ equals $A_S(\alpha_1)\cap A_S(\alpha_2)$ for a suitable choice of $\alpha_1,\alpha_2\in\R$.
So $f$ maps every such arc onto $A_R(\beta_1)\cap A_R(\beta_2)$ for some $\beta_1,\beta_2\in\R$.
In particular, all such arcs are mapped by $f$ onto open sets.
As the collection of all open arcs of the circle $S((0,0),1)$ of length at most $2\arccos r$ forms a base of the topology of the circle $S((0,0),1)$, we obtain that the restriction of $f^{-1}$ to $S((0,0),1)$ is continuous.
But $f^{-1}$ maps the compact set $S((0,0),1)$ bijectively onto itself, and so $f=(f^{-1})^{-1}$ restricted to $S((0,0),1)$ is continuous, as well.

Next, we will show that $f$ is continuous.
As we already know that $f$ is continuous on $S((0,0),1)$, it only remains to verify continuity on $S((0,0),r)$.
We put
\[
U=\{(u,v)\in (S((0,0),1))^2:\cardOfInterval (u,v)_R=2\},
\]
where $(u,v)_R$ is as in~\eqref{eq:openInterval}.
That is, $U$ is the set of all pairs $(u,v)\in (S((0,0),1))^2$ such that there are two distinct points in $S((0,0),r)$ lying strictly between $u$ and $v$.
For every $(u,v)\in U$, let $\psi(u,v)\in S((0,0),r)$ be the unique point such that $\psi(u,v)\in(u,v)_R$ and $(u,\psi(u,v))_R=\emptyset$.
That is, $\psi(u,v)$ lies strictly between $u$ and $v$ and, from the two elements of $R$ with this property, it is the one lying closer to $u$.
The map $\psi\colon U\to R$ is obviously continuous.
For the rest of the proof, we apply the notation
\[
P(\alpha):=(\cos\alpha,\sin\alpha),\quad\alpha\in\R.
\]
Let $\theta\colon S(0,r)\to U$ be the map given by
\[
\theta(rP(\alpha))=(f(P(\alpha)),f(P(\alpha+\pi))),\quad\alpha\in\R.
\]
The fact that the range of $\theta$ is a subset of $U$ is given by the facts that
\[
\cardOfInterval (P(\alpha),P(\alpha+\pi))_S=2
\]
and that $f\colon S\to R$ is a betweenness isomorphism.
By the already proven continuity of $f$ on $S((0,0),1)$, the map $\theta$ is also continuous, as well as the composition $\psi\circ\theta\colon S((0,0),r)\to R$.
So it remains to show that $f$ coincides with $\psi\circ\theta$ on $S((0,0),r)$.
To this end, we fix $s\in S((0,0),r)$.
Let $\alpha\in\R$ be such that $s=rP(\alpha)$.
Then $s$ is the unique element of $S$ belonging to $(P(\alpha),P(\alpha+\pi))_S$ such that $(P(\alpha),s)_S=\emptyset$.
As $f\colon S\to R$ is a betweenness isomorphism, $f(s)$ must be the unique element of $R$ belonging to $(f(P(\alpha)),f(P(\alpha+\pi)))_R$ such that $(f(P(\alpha)),f(s))_R=\emptyset$.
By the definition of the maps $\psi,\theta$, we immediately obtain that $f(s)=\psi\circ\theta(s)$.
As $s\in S((0,0),r)$ was chosen arbitrarily, the continuity of $f$ follows.

Let $a,b$ be the endpoints of the arc $A_S(0)$.
By \Cref{lem:arcs}, there is $\beta\in\R$ such that $f$ maps $a,b$ to the endpoints of the arc $A_R(\beta)$.
Let $\Phi^{\pm}\colon\R^2\to\R^2$ be the map given by
\begin{equation}
\label{def:Phi}
\Phi^{\pm}\colon tP(\alpha)\mapsto tP(\beta\pm\alpha),\quad t\ge 0,\alpha\in\R.
\end{equation}
Then $\Phi^+$ is a rotation and $\Phi^-$ is a composition of a rotation and a reflection.
So, both $\Phi^+$ and $\Phi^-$ are isometries of the plane onto itself and, in particular, betweenness isomorphisms of the plane onto itself.
As the arcs $A_S(0),A_R(\beta)$ have the same length (equal to $2\arccos r$),
we see that, for some choice of the sign $\pm$,
the restrictions of $f$ and $\Phi^{\pm}$ to the two-element set $\{a,b\}$
coincide;
let us denote the corresponding choice of $\Phi^+$ or $\Phi^-$ shortly by $\Phi$.
If we show that $\Phi^{-1}\circ f$ extends to a 
scaled isometry $\widetilde H$ of the plane then $f$ extends to the
scaled isometry $\Phi\circ\widetilde H$ of the plane.

So, without loss of generality, we can assume that $f$ fixes both $a$ and $b$ (because $\Phi^{-1}\circ f$ does).
Under this additional assumption, we will in fact prove that $f$ is the identity on $S$, and so it extends to the identity of the plane, which is a particular case of a
scaled isometry of the plane onto itself.

We put
\begin{equation}
\label{def:W}
W=\{s\in S:f(s)=s\}.
\end{equation}
The set $W$ is non-empty, as $a,b\in W$.
Since the intersection of $S$ with any line consists of at most four points, we may apply \Cref{lem:identity} to obtain that the set $W$ is collinearly closed in $S$.
Further, as $f$ is continuous, the set $W$ is topologically closed in $S$.

Our next goal is to show that the set $W\cap S((0,0),1)$ is topologically dense in $S((0,0),1)$.
Let $\alpha\in\R$ and $\gamma\in\R$ be such that $f(P(\alpha))=P(\gamma)$.
We will verify that
\begin{equation}
\label{eq:kArcs}
f(P(\alpha+2k\arccos r))=P(\gamma\pm 2k\arccos r),\quad k\in\N
\cup \{0\},
\end{equation}
where the sign $\pm$ does not depend on $k$.
(However, we do not claim at this moment that the sign would be independent of $\alpha$.)
For $k=0$, \eqref{eq:kArcs} is just the equality $f(P(\alpha))=P(\gamma)$.
Let
$k=1$.
Then $P(\alpha)$ and $P(\alpha+2\arccos r)$ are endpoints of the arc $A_S(\alpha+\arccos r)$, and so it is enough to apply \Cref{lem:arcs}.
Indeed,
the lemma implies that 
$f(A_S(\alpha+\arccos r))$
equals $A_R(\beta)$
for some $\beta$
and that one of the endpoints
of the arc $A_R(\beta)$
is the point $f(P(\alpha))=P(\gamma)$.
The other endpoint
necessarily
is
$P(\gamma+2\arccos r)$ 
or
$P(\gamma-2\arccos r)$;
this also determines the sign.
Now suppose that $k\in\N$ is such that the assertion is true for every $i=
0,
1
,\ldots,k$.
We will assume that
\[
f(P(\alpha+2i\arccos r))=P(\gamma+2i\arccos r),\quad i=
0,
1
,\ldots,k;
\]
the other case (with the sign `$-$' instead of `$+$') is completely analogous.
Then the induction hypothesis, together with another application of \Cref{lem:arcs} (on the arc with endpoints $P(\alpha+2k\arccos r)$ and $P(\alpha+2(k+1)\arccos r)$), tell us that
\[
f(P(\alpha+2(k+1)\arccos r))=P(\gamma+2k\arccos r\pm2\arccos r).
\]
Now it only suffices to observe that, since $f$ is a bijection and since $0<4\arccos r<2\pi$, we have
\begin{equation*}
\begin{split}
P(\gamma+2k\arccos r-2\arccos r)&=f(P(\alpha+2(k-1)\arccos r))\\
&\neq f(P(\alpha+2(k+1)\arccos r)),
\end{split}
\end{equation*}
and so 
\[
f(P(\alpha+2(k+1)\arccos r))=P(\gamma+2(k+1)\arccos r),
\]
as we needed.
This concludes the proof of~\eqref{eq:kArcs}.

As the two points $a,b$ lie in $W$
and are of the form $P(\pm \arccos r)$,
we have~\eqref{eq:kArcs}
for $\alpha=\gamma=-\arccos r$
and with `plus' sign;
consequently
\[
f(P((2k-1)\arccos r)))=P((2k-1)\arccos r)),\quad k=0,1,2,\ldots.
\]
So all the points $P((2k-1)\arccos r)$, $k=0,1,2,\ldots$, belong to $W$.
If $\frac{2\arccos r}{2\pi}$ is an irrational number then these  points form a dense subset of $S((0,0),1)$, and so $W\cap S((0,0),1)$ is also a dense subset of $S((0,0),1)$.
From now on, we assume that $\frac{2\arccos r}{2\pi}$ is a rational number.

Let $j$ be the smallest natural number such that $j\arccos r$ is a multiple of $\pi$.
We will show that if $\alpha\in\R$ is such that $P(\alpha)\in W$ then also $P(\alpha+\pi)\in W$.
This is easier if $j$ is an even number.
Indeed, then $j\arccos r$ must be an odd multiple of $\pi$ (otherwise, $\tfrac j2\arccos r$ is still a multiple of $\pi$ and $\tfrac j2<j$, a contradiction), and so
\[
P(\alpha+\pi)=P(\alpha+j\arccos r)=P(\alpha-j\arccos r).
\]
As $j$ is even and $f(P(\alpha))=P(\alpha)$, it follows by~\eqref{eq:kArcs}
        used with $\gamma=\alpha$
that
\[
f(P(\alpha+\pi))=f(P(\alpha+j\arccos r))=P(\alpha\pm j\arccos r)=P(\alpha+\pi).
\]
So $P(\alpha+\pi)\in W$, as we wanted.
Now suppose that $j$ is an odd number and let $l\in\N$ be such that $j\arccos r=l\pi$.
Then, as $j+1$ is even, \eqref{eq:kArcs} implies that the two element set consisting of the points
\[
A^+:=P(\alpha+l\pi+\arccos r)=P(\alpha+(j+1)\arccos r)
\]
and
\[
A^-:=P(\alpha+l\pi-\arccos r)=P(\alpha-(j+1)\arccos r)
\]
is mapped by $f$ onto itself.
As $rP(\alpha+l\pi)$ is the unique element of $S$ lying strictly between $A^+$ and $A^-$ and as $f$ is a betweenness isomorphism, it follows that $rP(\alpha+l\pi)$ is a fixed point of $f$, that is, $rP(\alpha+l\pi)\in W$.
The points $P(\alpha)$, $rP(\alpha+l\pi)$ and $P(\alpha+\pi)$ are collinear, and we already know that the first two of them belong to $W$.
Since $W$ is collinearly closed in $S$, $P(\alpha+\pi)$ must belong to $W$ as well.
This completes the proof of the fact that $P(\alpha+\pi)\in W$ whenever $P(\alpha)\in W$.

Now it is easy to verify that if $\alpha\in\R$ is such that $P(\alpha)\in W$ then also $rP(\alpha)\in W$.
Indeed, if $P(\alpha)\in W$, then, as we already know, $P(\alpha+\pi)\in W$.
As the point $rP(\alpha)$ is collinear with $P(\alpha)$ and $P(\alpha+\pi)$ and the set $W$ is collinearly closed in $S$, the assertion follows.

Now we are ready to show the density of $W\cap S((0,0),1)$ in $S((0,0),1)$.
Suppose, for a contradiction, that $W\cap S((0,0),1)$ is not dense in $S((0,0),1)$.
Let $A\subseteq S((0,0),1)$ be a maximal open arc which does not intersect $W$.
Since $a,b\in W$,
\eqref{eq:kArcs} 
used with $\alpha=\gamma=-\arccos r$
implies that
$P((2k-1)\arccos r)$ belongs to $W$
for all $k=0,1,2,\ldots$;
hence the length of the arc $A$ is at most $2\arccos r<\pi$.
So there are $\alpha\in\R$ and $\gamma\in(0,\pi)$ such that $P(\alpha)$ and $P(\alpha+\gamma)$ are endpoints of the arc $A$ and
\begin{equation}
\label{eq:arcNotIntersected}
A=\{P(\alpha+\omega):\omega\in(0,\gamma)\}.
\end{equation}
As $A$ is a maximal open arc not intersecting $W$ and the set $W$ is closed, both $P(\alpha)$ and $P(\alpha+\gamma)$ belong to $W$.
It is easy to see that there is some $\omega\in(0,\gamma)$ such that the points $P(\alpha+\pi),rP(\alpha+\gamma)$ and $P(\alpha+\omega)$ are collinear, see \Cref{imageDensity}.
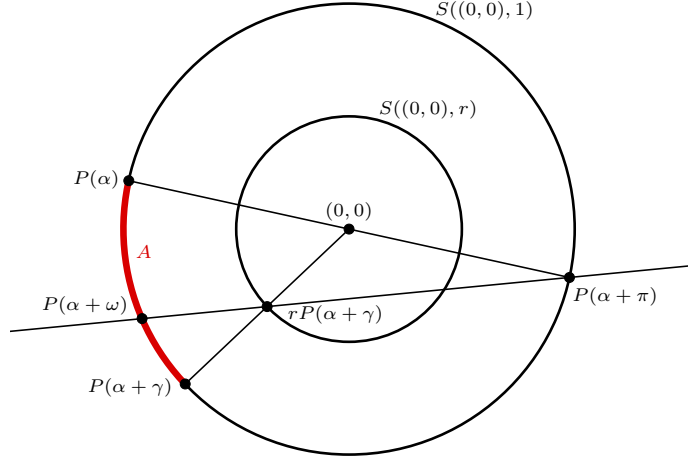
\begin{figure}
\centering
\colorlet{myred}{red!85!black}
\begin{tikzpicture}[line cap=round,line join=round,>=triangle 45,x=4cm,y=4cm,scale=0.75]
\clip(-1.5,-1.1) rectangle (1.5,1.1);
\draw [line width=1pt] (0,0) circle (1);
\draw [line width=1pt] (0,0) circle (0.5);
\draw [line width=0.6pt,domain=-1.8832:3.244] plot(\x,{(--0.4137-0.1292*\x)/-1.3397});
\draw [line width=0.6pt] (-0.9767,0.2146)-- (0.9767,-0.2146);
\draw [line width=0.6pt] (-0.726,-0.687)-- (0,0);
\draw [shift={(0,0)},line width=2.5pt,color=myred]  plot[domain=2.9253:3.8997,variable=\t]({1*1*cos(\t r)+0*1*sin(\t r)},{0*1*cos(\t r)+1*1*sin(\t r)});
\begin{scriptsize}
\draw [fill=black] (0,0) circle (2.5pt);
\draw[color=black] (0.0,0.08) node {$(0,0)$};
\draw[color=black] (0.6,0.97) node {$S((0,0),1)$};
\draw[color=black] (0.35,0.53) node {$S((0,0),r)$};
\draw [fill=black] (-0.976,0.2146) circle (2.5pt);
\draw[color=black] (-1.12,0.22) node {$P(\alpha)$};
\draw [fill=black] (-0.7260,-0.687) circle (2.5pt);
\draw[color=black] (-0.97,-0.7) node {$P(\alpha+\gamma)$};
\draw [fill=black] (-0.3630,-0.3438) circle (2.5pt);
\draw[color=black] (-0.06,-0.38) node {$rP(\alpha+\gamma)$};
\draw [fill=black] (0.9767,-0.2146) circle (2.5pt);
\draw[color=black] (1.18,-0.3) node {$P(\alpha+\pi)$};
\draw [fill=black] (-0.917,-0.397) circle (2.5pt);
\draw[color=black] (-1.17,-0.34) node {$P(\alpha+\omega)$};
\draw[color=myred] (-0.91,-0.1) node {$A$};
\end{scriptsize}
\end{tikzpicture}
\caption{The line passing through the points $P(\alpha+\pi)$ and $rP(\alpha+\gamma)$ intersects the arc $A$ at $P(\alpha+\omega)$ for some $\omega\in(0,\gamma)$.}
\label{imageDensity}
\end{figure}
But, by what we proved earlier, the points $P(\alpha+\pi)$ and $rP(\alpha+\gamma)$ belong to $W$.
Since $W$ is collinearly closed in $S$, the point $P(\alpha+\omega)$ belongs to $W$, too.
But, by~\eqref{eq:arcNotIntersected}, $P(\alpha+\omega)$ also belongs to $A$.
This is a contradiction with the assumption that the arc $A$ does not intersect $W$.
It follows that the set $W\cap S((0,0),1)$ is dense in $S((0,0),1)$.

We already know that, whenever $\alpha\in\R$ is such that $P(\alpha)\in W$, then also $rP(\alpha)\in W$.
Since $W\cap S((0,0),1)$ is dense in $S((0,0),1)$, we immediately obtain that $W\cap S((0,0),r)$ is also dense in $S((0,0),r)$.
Altogether, $W$ is dense in $S=S((0,0),r)\cup S((0,0),1)$.
As the set $W$ is also closed in $S$, we conclude that $W=S$.
So, by~\eqref{def:W}, $f$ is the identity on $S$, which completes the proof.
\end{proof}


\section{Collinearity vs. betweenness in \(\A\)}\label{sec:BC}

It follows directly from definitions that every betweenness isomorphism is a collinearity isomorphism.
However, the converse is not true in general, as we can see from the following easy examples.

\begin{example}\label{exBC1}
Let
\[
A=\{(-1,0),(0,-1),(0,0),(0,1),(1,0)\}
\]
and
\[
B=\{(0,-1),(0,0),(0,1),(1,0),(2,0)\}.
\]
Then the map $f\colon A\to B$ given by
\[
f(a)=
\begin{cases}
(2,0)&\textnormal{if }a=(-1,0)\\
a&\textnormal{if }a\neq(-1,0)
\end{cases}
\]
is a collinearity isomorphism of $A$ and $B$.
However, $A$ and $B$ are not betweenness isomorphic (because the sets do not have the same number of extreme points).
\end{example}

\begin{example}\label{exBC2}
Let $A=\{(x,0)\in\R^2:x\ge 0\}$ and $B=\{(x,0)\in\R^2:x\in\R\}$.
Then any bijection between $A$ and $B$ is a collinearity isomorphism of $A$ and $B$ but $A$ and $B$ are not betweenness isomorphic (because $A$ has an extreme point while $B$ does not).
\end{example}

Nevertheless, we will show that if $A,B$ belong to the class $\A$ (consisting of sets which are the union of two distinct concentric circles) then there is no difference between collinearity and betweenness isomorphisms from $A$ to $B$.
As a consequence, we can equivalently formulate our main results in terms of collinearity instead of betweenness.

We start by a variant of \Cref{lem:arcs}.
But this time, it seems to be easier to consider arcs of double length (compared to the arcs $A_S(\alpha)$, $\alpha\in\R$) first.

Let $S=S(c,\rho)\cup S(c,\rho')$ where $c\in\R^2$ and $\rho<\rho'$.
For every $\alpha\in\R$, we define
\begin{equation*}
\begin{split}
\widetilde A_S(\alpha)&=\Bigl\{c+\rho'(\cos\theta,\sin\theta):\theta\in\big(\alpha-2\arccos\frac\rho{\rho'},\alpha+2\arccos\frac\rho{\rho'}\big)\Bigr\}\\
&=A_S(\alpha-\arccos\frac\rho{\rho'})\cup\big\{c+\rho'(\cos\alpha,\sin\alpha)\big\}\cup A_S(\alpha+\arccos\frac\rho{\rho'}).
\end{split}
\end{equation*}

\begin{lemma}
\label{lem:doubleArcs}
Let $S=S(c,\rho)\cup S(c,\rho')$ and $R=S(d,\tau)\cup S(d,\tau')$ where $c,d\in\R^2$, $\rho<\rho'$ and $\tau<\tau'$.
Suppose that $f\colon S\to R$ is a collinearity isomorphism.
Then
\(
   f(S(c,\rho'))=S(d,\tau')
\).
Moreover,
for every $\alpha\in\R$ there is $\beta\in\R$ such that $f(\widetilde A_S(\alpha))=\widetilde A_R(\beta)$,
and such that $f$ maps each of the two endpoints of the arc $\widetilde A_S(\alpha)$ onto an endpoint of the arc $\widetilde A_R(\beta)$.
\end{lemma}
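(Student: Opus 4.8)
The plan is to describe the outer circle $S(c,\rho')$ and each double arc $\widetilde A_S(\alpha)$ using only the cardinalities $\card\chull_S(\{p,q\})$ of collinear hulls of pairs of points, since a collinearity isomorphism is forced to preserve these. Note first that if $f$ is a collinearity isomorphism, then both $f$ and $f^{-1}$ map collinearly closed subsets onto collinearly closed subsets (immediate from the definitions), so $f(\chull_S(A))=\chull_R(f(A))$ for every $A\subseteq S$; in particular $\card\chull_S(\{p,q\})=\card\chull_R(\{f(p),f(q)\})$ for all $p,q\in S$. Moreover, for distinct $p,q$ the set $\chull_S(\{p,q\})$ is simply the intersection of $S$ with the line through $p$ and $q$, so its cardinality is the number of points of $S$ on that line, a number between $2$ and $4$.

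The first step is to show that, for $p\in S$, one has $p\in S(c,\rho')$ if and only if there is $q\in S\setminus\{p\}$ with $\card\chull_S(\{p,q\})=2$. Indeed, every line through a point of $S(c,\rho)$ passes through the interior of the disc bounded by $S(c,\rho')$, hence is a secant of $S(c,\rho')$ and already contains three points of $S$; so $\card\chull_S(\{p,q\})=2$ forces $p,q\in S(c,\rho')$. Conversely, if $p\in S(c,\rho')$ and $q\in S(c,\rho')$ is close enough (but not equal) to $p$, then the line through $p$ and $q$ stays at distance greater than $\rho$ from $c$ (because $\rho<\rho'$), hence misses $S(c,\rho)$ and witnesses $\card\chull_S(\{p,q\})=2$. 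Since $f$ preserves the above cardinalities, this yields $f(S(c,\rho'))=S(d,\tau')$ (and consequently also $f(S(c,\rho))=S(d,\tau)$).

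For the double arcs, fix $\alpha\in\R$ and set $p=c+\rho'(\cos\alpha,\sin\alpha)$. A short computation shows that, for $\phi\not\equiv\alpha$, the line through $p$ and $c+\rho'(\cos\phi,\sin\phi)$ has distance $\rho'|\cos\tfrac{\phi-\alpha}{2}|$ from $c$; this distance is greater than $\rho$ exactly when $c+\rho'(\cos\phi,\sin\phi)\in\widetilde A_S(\alpha)$ and equals $\rho$ exactly when that point is an endpoint of $\widetilde A_S(\alpha)$. Combining this with the fact that, for $q\in S(c,\rho')\setminus\{p\}$, the line through $p$ and $q$ meets $S$ in $\{p,q\}$ together with $0$, $1$, or $2$ further points according as that line misses, is tangent to, or is a secant of $S(c,\rho)$ — whereas $q\in S(c,\rho)$ gives $\card\chull_S(\{p,q\})\ge3$ — one obtains the collinearity-invariant descriptions
\[
\widetilde A_S(\alpha)=\{p\}\cup\{q\in S\setminus\{p\}:\card\chull_S(\{p,q\})=2\}
\]
and
\[
\{\text{the two endpoints of }\widetilde A_S(\alpha)\}=\{q\in S(c,\rho')\setminus\{p\}:\card\chull_S(\{p,q\})=3\}
\]
(the restriction to $S(c,\rho')$ in the second line is needed because $\card\chull_S(\{p,q\})=3$ also holds for the two points of $S(c,\rho)$ at which the tangents from $p$ touch). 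By the first step, $f(p)\in S(d,\tau')$, so $f(p)=d+\tau'(\cos\beta,\sin\beta)$ for some $\beta\in\R$; applying the two displayed descriptions in $S$ together with their exact analogues in $R$ centred at $f(p)$, and using that $f$ preserves $\card\chull$, maps $S(c,\rho')$ onto $S(d,\tau')$, and maps $p$ to $f(p)$, we conclude that $f(\widetilde A_S(\alpha))=\widetilde A_R(\beta)$ and that $f$ carries the two endpoints of $\widetilde A_S(\alpha)$ onto the two endpoints of $\widetilde A_R(\beta)$.

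I expect the only genuinely delicate point to be the bookkeeping of the possible values $\card\chull_S(\{p,q\})\in\{2,3,4\}$, according to whether each of $p,q$ lies on the inner or the outer circle and whether the line through them misses, touches, or crosses $S(c,\rho)$; everything else is elementary plane geometry together with preservation of collinear hulls. Passing to double-length arcs is exactly what makes this argument possible: the single arc $A_S(\alpha)$ is one of the two arcs into which a tangent line of $S(c,\rho)$ splits $S(c,\rho')$, and telling the two sides of that tangent apart genuinely requires betweenness, whereas $\widetilde A_S(\alpha)$ is precisely the union of the two arcs of $S(c,\rho')$ that $p$ can "see past" $S(c,\rho)$, which collinearity alone already detects.
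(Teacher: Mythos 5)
Your proof is correct and takes essentially the same approach as the paper: both characterize the outer circle by the existence of a partner $q$ with $\card\chull_S(\{p,q\})=2$, describe $\widetilde A_S(\alpha)$ as $\{u_S\}\cup\{s:\card\chull_S(\{s,u_S\})=2\}$ and its endpoints as the points of the \emph{outer} circle with $\card\chull_S(\{s,u_S\})=3$, and then transport these descriptions via preservation of collinear hulls. Your explicit distance computation $\rho'\bigl|\cos\tfrac{\phi-\alpha}{2}\bigr|$ just makes precise what the paper dismisses as ``easy to check.''
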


\begin{proof}
Suppose that $S=S(c,\rho)\cup S(c,\rho')$ where $c\in\R^2$ and $\rho<\rho'$, $R=S(d,\tau)\cup S(d,\tau')$ where $d\in\R^2$ and $\tau<\tau'$, and $\alpha\in\R$.
We have
\[
S(c,\rho')=\big\{s\in S:\exists s'\in S\quad\forall s''\in S\setminus\{s,s'\}\quad s,s',s''\textnormal{ are not collinear}\big\},
\]
and similarly
\[
S(d,\tau')=\big\{r\in R:\exists r'\in R\quad\forall r''\in R\setminus\{r,r'\}\quad r,r',r''\textnormal{ are not collinear}\big\}.
\]
As $f$ is a collinearity isomorphism, it follows that
\begin{equation}
\label{eq:outerCircles}
f(S(c,\rho'))=S(d,\tau').
\end{equation}

Let $\beta\in\R$ be such that
\begin{equation}
f(c+\rho'(\cos\alpha,\sin\alpha))=d+\tau'(\cos\beta,\sin\beta).
\end{equation}
We put
\begin{equation*}
\begin{aligned}
u_S&=c+\rho'(\cos\alpha,\sin\alpha),\\
v_S&=c+\rho'\Big(\cos\big(\alpha-2\arccos\frac\rho{\rho'}\big),\sin\big(\alpha-2\arccos\frac\rho{\rho'}\big)\Big),\\
w_S&=c+\rho'\Big(\cos\big(\alpha+2\arccos\frac\rho{\rho'}\big),\sin\big(\alpha+2\arccos\frac\rho{\rho'}\big)\Big),\\
u_R&=d+\tau'(\cos\beta,\sin\beta),\\
v_R&=d+\tau'\Big(\cos\big(\beta-2\arccos\frac\tau{\tau'}\big),\sin\big(\beta-2\arccos\frac\tau{\tau'}\big)\Big),\\
w_R&=d+\tau'\Big(\cos\big(\beta+2\arccos\frac\tau{\tau'}\big),\sin\big(\beta+2\arccos\frac\tau{\tau'}\big)\Big);
\end{aligned}
\end{equation*}
then $f(u_S)=u_R$, $v_S,w_S$ are endpoints of the arc $\widetilde A_S(\alpha)$ and $v_R,w_R$ are endpoints of the arc $\widetilde A_R(\beta)$.
It is easy to check that
\[
\widetilde A_S(\alpha)=\{u_S\}\cup\{s\in S:\card\chull_S(\{s,u_S\})=2\},
\]
and
\[
\widetilde A_R(\beta)=\{u_R\}\cup\{r\in R:\card\chull_R(\{r,u_R\})=2\}.
\]
As every collinearity isomorphism obviously preserves collinear hulls, we obtain $f(\widetilde A_S(\alpha))=\widetilde A_R(\beta)$.

Further,
\[
\{v_S,w_S\}=\{s\in S(c,\rho'):\card\chull_S(\{s,u_S\})=3\}
\]
and
\[
\{v_R,w_R\}=\{r\in S(d,\tau'):\card\chull_R(\{r,u_R\})=3\},
\]
and so, by~\eqref{eq:outerCircles}, it follows that
\[
f(\{v_S,w_S\})=\{v_R,w_R\}.
\qedhere
\]
\end{proof}

\begin{lemma}
\label{lem:collinearContinuous}
Let $S=S(c,\rho)\cup S(c,\rho')$ and $R=S(d,\tau)\cup S(d,\tau')$ where $c,d\in\R^2$, $\rho<\rho'$ and $\tau<\tau'$.
Suppose that $f\colon S\to R$ is a collinearity isomorphism.
Then the restriction of $f$ to $S(c,\rho')$ is continuous.
\end{lemma}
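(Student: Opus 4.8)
The plan is to run the continuity argument from the proof of \Cref{thm:extension} again, but with \Cref{lem:doubleArcs} in the role of \Cref{lem:arcs} and with the double arcs $\widetilde A_S(\alpha)$, $\alpha\in\R$, in the role of the arcs $A_S(\alpha)$. Write $g=f|_{S(c,\rho')}$. By \Cref{lem:doubleArcs}, $g$ is a bijection of $S(c,\rho')$ onto $S(d,\tau')$ which sends each $\widetilde A_S(\alpha)$ onto some $\widetilde A_R(\beta)$; since $g$ is injective, it therefore sends every finite intersection $\bigcap_{i=1}^n\widetilde A_S(\alpha_i)$ onto $\bigcap_{i=1}^n\widetilde A_R(\beta_i)$, which is open in $S(d,\tau')$ (each $\widetilde A_R(\beta_i)$ being an open arc). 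Hence $g$ maps every member of the family
\[
\mathcal B=\Bigl\{\bigcap_{i=1}^n\widetilde A_S(\alpha_i):n\in\N,\ \alpha_1,\dots,\alpha_n\in\R\Bigr\}
\]
to an open subset of $S(d,\tau')$.

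The key step is to verify that $\mathcal B$ is a base for the topology of $S(c,\rho')$. The one point that is \emph{not} a routine transcription of the argument in \Cref{thm:extension} is that the arcs $A_S(\alpha)$ used there have angular width $2\arccos\frac\rho{\rho'}<\pi$, so that the intersection of two of them is again a single short arc, whereas the double arcs $\widetilde A_S(\alpha)$ have angular width $4\arccos\frac\rho{\rho'}$, which may exceed $\pi$ (precisely when $\rho/\rho'\le\frac{\sqrt2}{2}$); in that case the intersection of just two of them need not be a single short arc, and one is forced to intersect several. The remedy is to pass to complements: each $\widetilde A_S(\alpha)$ is the complement in $S(c,\rho')$ of a closed arc of the fixed positive angular width $w:=2\pi-4\arccos\frac\rho{\rho'}$, and, conversely, every closed arc of $S(c,\rho')$ of angular width exactly $w$ equals $S(c,\rho')\setminus\widetilde A_S(\alpha)$ for some $\alpha$. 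So, given $p\in S(c,\rho')$ and an open arc $J\ni p$, I would pick an open arc $V$ with $p\in V\subseteq\overline V\subseteq J$ and of angular width at most $2\pi-w$ (both requirements just ask $V$ to be short enough); then $S(c,\rho')\setminus V$ is a closed arc whose angular width lies in $[w,2\pi)$, hence it can be written as a union $\bigcup_{i=1}^n G_i$ of finitely many closed arcs $G_i$ of angular width exactly $w$ contained in it (slide a window of width $w$ with small overlaps). Choosing $\alpha_i$ with $G_i=S(c,\rho')\setminus\widetilde A_S(\alpha_i)$ gives $\bigcap_{i=1}^n\widetilde A_S(\alpha_i)=S(c,\rho')\setminus\bigcup_{i=1}^n G_i=V$, so $p\in V\subseteq J$ with $V\in\mathcal B$. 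This proves that $\mathcal B$ is a base.

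With $\mathcal B$ a base, $g$ maps every open subset of $S(c,\rho')$ (a union of members of $\mathcal B$) to an open subset of $S(d,\tau')$; equivalently, $g^{-1}$ is continuous. Finally, $g^{-1}$ is a continuous bijection from the compact space $S(d,\tau')$ onto the Hausdorff space $S(c,\rho')$, hence a homeomorphism, so $g=(g^{-1})^{-1}$, that is, the restriction of $f$ to $S(c,\rho')$, is continuous. The only genuine obstacle is the base verification in the middle paragraph, and within it the regime $\rho/\rho'\le\frac{\sqrt2}{2}$ where a double arc covers more than half of the circle; everything else is a straightforward copy of the corresponding portion of the proof of \Cref{thm:extension}.
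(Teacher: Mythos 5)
Your proposal is correct and follows essentially the same route as the paper: map finite intersections of the double arcs $\widetilde A_S(\alpha)$ to finite intersections of double arcs in $R$ via \Cref{lem:doubleArcs}, observe that these intersections form a base of the topology of the outer circle, conclude that $f^{-1}$ is continuous on $S(d,\tau')$, and finish by compactness. The only difference is that you actually prove the base claim (via complements of closed arcs of fixed width $2\pi-4\arccos\frac{\rho}{\rho'}$), which the paper merely asserts with the word ``Fortunately''; your complementation argument is a valid way to handle the regime $\frac{\rho}{\rho'}\le\frac{\sqrt2}{2}$ where a double arc exceeds a semicircle.
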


\begin{proof}
We would like to apply the same argument as in the proof of \Cref{thm:extension} but we must be a little cautious this time.
In \Cref{thm:extension}, we used the fact that the collection of sets $A_S(\alpha_1)\cap A_S(\alpha_2)$, $\alpha_1,\alpha_2\in\R$, is a base of the topology of the outer circle of $S$.
However, this is not necessarily true if we replace arcs $A_S(\cdot)$ by arcs $\widetilde A_S(\cdot)$.
Indeed, each $A_S(\alpha)$ is always contained in some semicircle.
But, if $\frac\rho{\rho'}<2^{-\frac12}$, then none of the (bigger) arcs $\widetilde A_S(\alpha)$ is contained in a semicircle.
In that case, two arcs $\widetilde A_S(\alpha_1)$, $\widetilde A_S(\alpha_2)$ may intersect at two disjoint smaller arcs (lying on the opposite sides of $S(c,\rho')$).
Fortunately, the collection of all finite intersections of the arcs $\widetilde A_S(\alpha)$, $\alpha\in\R$, still forms a base of the topology of $S(c,\rho')$.
By \Cref{lem:doubleArcs}, each set from this base is mapped by $f$ onto $\widetilde A_R(\beta_1)\cap\ldots\cap\widetilde A_R(\beta_j)$ for some $\beta_1,\ldots,\beta_j\in\R$.
In particular, each set from the base is mapped onto an open subset of $R$.
As $f$ is a bijection from $S(c,\rho')$ onto $S(d,\tau')$
by \Cref{lem:doubleArcs}, we obtain that the restriction of $f^{-1}$ to $S(d,\tau')$ is continuous.
By compactness of the two circles, continuity of $f$ on $S(c,\rho')$ follows as well.
\end{proof}

\begin{lemma}
\label{lem:betwVsCollinear}
Let $S=S(c,\rho)\cup S(c,\rho')$ and $R=S(d,\tau)\cup S(d,\tau')$ where $c,d\in\R^2$, $\rho<\rho'$ and $\tau<\tau'$.
Suppose that $f\colon S\to R$ is a collinearity isomorphism.
Then $f$ is a betweenness isomorphism.
\end{lemma}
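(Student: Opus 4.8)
The plan is to reduce the problem to showing that a collinearity isomorphism $f$ respects the ternary betweenness relation, i.e.\ that for $x,y,z\in S$ with $y\in[x,z]_S$ we have $f(y)\in[f(x),f(z)]_R$ and conversely. Since $f$ is already known to preserve collinearity, the only thing that can go wrong is the \emph{order} of collinear triples: three collinear points lie on a line, and $f$ sends them to three collinear points, but a priori the ``middle'' one could be sent to an ``outer'' one. So the whole task is to rule out such order reversals on each of the (at most two, three, or four point) intersections of $S$ with a line.

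First I would classify the lines meeting $S$ by how many points of $S$ they contain. A line can meet $S(c,\rho)\cup S(c,\rho')$ in $0,1,2,3$ or $4$ points; the cases that matter are: (a) a secant of the outer circle missing the inner one ($2$ points, both on $S(c,\rho')$); (b) a line tangent to the inner circle and secant to the outer ($3$ points: two on $S(c,\rho')$, one on $S(c,\rho)$, the inner point strictly between the two outer ones); (c) a secant of both circles ($4$ points, appearing in the order outer--inner--inner--outer along the line). By Lemma~\ref{lem:doubleArcs} we know $f(S(c,\rho'))=S(d,\tau')$ and hence $f(S(c,\rho))=S(d,\tau)$, so $f$ preserves the inner/outer dichotomy; therefore $f$ maps a line of type (b) in $S$ to a line of type (b) in $R$, and type (c) to type (c), and type (a) to type (a). For type (a), betweenness is automatic: of two points, neither is between the others in a nondegenerate way, so nothing to check. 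For type (b) with points $p,q$ outer and $m$ inner: collinearity-preservation forces $\{f(p),f(q),f(m)\}$ collinear with $f(m)$ inner and $f(p),f(q)$ outer; but a line through two points of the outer circle $S(d,\tau')$ that also meets the inner circle $S(d,\tau)$ meets it in one or two points, and when it is tangent (which the image configuration must be, being a type-(b) line) that single inner point is automatically the midpoint-side interior point, hence between $f(p)$ and $f(q)$. So $m\in[p,q]_S\iff f(m)\in[f(p),f(q)]_R$ holds for free, and the reversed implications $f(p)\in[f(m),f(q)]$ etc.\ are simply false on both sides. The genuinely nontrivial case is type (c), the four-point secant.

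For a type-(c) line $\ell$, write the four points in order along $\ell$ as $p_1,p_2,p_3,p_4$ with $p_1,p_4$ on the outer circle and $p_2,p_3$ on the inner circle, so that $(p_1,p_4)_S=\{p_2,p_3\}$ and $p_2$ is the inner point closer to $p_1$. We know $f$ maps $\{p_1,p_4\}$ into $S(d,\tau')$ and $\{p_2,p_3\}$ into $S(d,\tau)$, and all four images are collinear, so they appear along a line $\ell'$ of type (c) in the order outer, inner, inner, outer; the only ambiguity is whether $f$ preserves or reverses the linear order, i.e.\ whether $(f(p_1),f(p_3))_R=\{f(p_2)\}$ or $(f(p_1),f(p_2))_R=\{f(p_3)\}$. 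To pin this down I would use continuity: by Lemma~\ref{lem:collinearContinuous}, $f$ restricted to $S(c,\rho')$ is continuous, and a parallel argument (or Lemma~\ref{lem:doubleArcs} applied to arcs) gives that the assignment $\alpha\mapsto\beta(\alpha)$, describing the image of $\widetilde A_S(\alpha)$, is continuous. As $p_1$ runs continuously over an arc of $S(c,\rho')$, the second intersection point $p_4$ of the corresponding secant runs continuously too, and the ``orientation sign'' of $f$ on these secants cannot jump (it takes values in the discrete set $\{+,-\}$ and depends continuously on the configuration), so it is locally constant, hence constant on each connected family of type-(c) secants — and the space of type-(c) secants is connected (parametrize by the foot of the perpendicular from $c$, which ranges over an annulus, or by direction and signed distance). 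Thus a single normalization fixes the sign globally. To fix it, it suffices to exhibit one type-(c) secant on which $f$ clearly preserves order; for this I would invoke what was already established in the proof of Theorem~\ref{thm:extension}, namely that after composing with a suitable scaled isometry we may assume $f$ fixes the endpoints $a,b$ of the arc $A_S(0)$, and then trace a secant through points close to $a$ and $b$ whose image, by continuity and the fixed-point normalization, is order-preserving.

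The main obstacle I anticipate is precisely this global sign/orientation argument for the four-point secants: one must be careful that a collinearity isomorphism could a priori act like an orientation-reversing ``inversion in the line'' on each individual secant, swapping $p_2\leftrightarrow p_3$ while keeping $p_1,p_4$ fixed, and one has to exclude this consistently across all lines rather than line-by-line. The clean way is: (i) show via Lemma~\ref{lem:doubleArcs} and continuity (Lemma~\ref{lem:collinearContinuous} plus its analogue for the inner circle, which follows since $f(S(c,\rho))=S(d,\tau)$ and the inner point of each type-(c) secant depends continuously on the data) that $f$ is continuous on all of $S$; (ii) observe that ``for every type-(c) secant, $f$ preserves the cyclic order $p_1,p_2,p_3,p_4$'' is an open-and-closed condition on the connected parameter space of such secants, hence holds everywhere once it holds at one point; (iii) verify it at one point using the normalization from Theorem~\ref{thm:extension}. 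Once order is preserved on every line, $f$ preserves betweenness on every collinear triple, and on non-collinear triples betweenness is vacuous for both $f$ and $f^{-1}$ (which is also a collinearity isomorphism, so the argument is symmetric). This completes the proof that $f$ is a betweenness isomorphism.
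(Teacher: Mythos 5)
Your reduction to the order of collinear quadruples is the right identification of the crux, and your handling of the two- and three-point lines matches the easy part of the paper's argument. The problem is with how you resolve the four-point (type (c)) case. Your anchoring step (iii) is circular: you propose to fix the global orientation sign by invoking the normalization from the proof of \Cref{thm:extension}, but that theorem and its entire proof machinery (\Cref{lem:arcs}, \Cref{lem:identity}, the reduction to $f$ fixing the endpoints $a,b$ of $A_S(0)$) presuppose that $f$ is a \emph{betweenness} isomorphism -- which is exactly what this lemma is trying to establish. The same circularity infects your step (i): continuity of $f$ on the \emph{inner} circle is obtained in the proof of \Cref{thm:extension} via the map $\psi\circ\theta$, which selects, among the two inner points on the image line, the one with $(f(P(\alpha)),f(s))_R=\emptyset$; making that selection is legitimate only because $f$ preserves betweenness. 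For a collinearity isomorphism, deciding which of the two inner points is the image is precisely the order ambiguity you are trying to resolve, so you cannot use inner-circle continuity (or the ``sign depends continuously on the secant'' claim that rests on it) as an input. Finally, even granting a normalization in which $f$ fixes two outer points, your assertion that a secant through nearby points has order-preserving image is stated without any argument; fixing two points of the outer circle says nothing a priori about which inner point goes where.

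The paper avoids all of this by arguing locally, triple by triple, using only continuity on the \emph{outer} circle (\Cref{lem:collinearContinuous}, which is legitimately available for collinearity isomorphisms via \Cref{lem:doubleArcs}). In the only nontrivial configuration -- $x$ on the outer circle, $y,z$ on the inner circle, $y\in(x,z)_S$ -- one picks an auxiliary outer point $w$ not collinear with $x,y,z$, lets $w_1$ be the second outer intersection of the line $wy$ and $w_2$ the second outer intersection of the line $w_1z$, and observes that the two candidate orders of $f(y),f(z)$ correspond to $f(x)$ and $f(w)$ lying in the same, respectively different, connected components of $S(d,\tau')\setminus\{f(w_1),f(w_2)\}$. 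Since $x$ and $w$ lie in the same component of $S(c,\rho')\setminus\{w_1,w_2\}$ and $f$ is continuous on the outer circle, connectedness rules out the order-reversing alternative. No global sign, no base-point normalization, and no inner-circle continuity are needed. You would need to replace your steps (i)--(iii) by an argument of this kind (or some other device that uses only outer-circle continuity) for your proof to go through.
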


\begin{proof}
It suffices to show that $f$ preserves betweenness; then we just apply this weaker result on both $f$ and $f^{-1}$.
So suppose that $x,y,z\in S$ are such that $y$ lies between $x$ and $z$; we want to prove that $f(y)$ lies between $f(x)$ and $f(z)$.
This is trivial if $y\in\{x,z\}$, so we may assume that $y\in(x,z)_S$; then, in particular, $y\in S(c,\rho)$.
By \Cref{lem:doubleArcs}, $f(S(c,\rho'))=S(d,\tau')$.
So, as $f$ preserves collinearity, we easily obtain the desired claim if both $x$ and $z$ belong to $S(c,\rho')$.
If this is not the case then
at least one of the points $x,z$ must
still
belong to $S(c,\rho')$ (otherwise $(x,z)_S=\emptyset$),
so we may assume that $x\in S(c,\rho')$ and $z\notin S(c,\rho')$ (the other case is completely analogous).
Then $f(x)\in S(d,\tau')$ and $f(y),f(z)\in S(d,\tau)$.
So, either $f(y)$ is between $f(x)$ and $f(z)$, or $f(z)$ is between $f(x)$ and $f(y)$; we just need to exclude the latter option.

We fix an arbitrary point $w\in S(c,\rho')$ which is not collinear with $x,y,z$; then $f(w)\in S(d,\tau')$ is not collinear with $f(x),f(y),f(z)$.
Let $w_1$ be the unique element of $S(c,\rho')\setminus\{w\}$ which is collinear with $w,y$, and let $w_2$ be the unique element of $S(c,\rho')\setminus\{w_1\}$ which is collinear with $w_1,z$.
Then $f(w_1)$ is the unique element of $S(d,\tau')\setminus\{f(w)\}$ which is collinear with $f(w),f(y)$, and $f(w_2)$ is the unique element of $S(d,\tau')\setminus\{f(w_1)\}$ which is collinear with $f(w_1),f(z)$.
The set $S(c,\rho')\setminus\{w_1,w_2\}$, resp. $S(d,\tau')\setminus\{f(w_1),f(w_2)\}$, has two connected components.
From a picture, one can easily check that
\begin{itemize}
\item $x$ and $w$ belong to the same connected component of $S(c,\rho')\setminus\{w_1,w_2\}$,
\item if $f(y)$ is between $f(x)$ and $f(z)$ then $f(x)$ and $f(w)$ belong to the same connected component of $S(d,\tau')\setminus\{f(w_1),f(w_2)\}$,
\item if $f(z)$ is between $f(x)$ and $f(y)$ then $f(x)$ and $f(w)$ belong to distinct connected components of $S(d,\tau')\setminus\{f(w_1),f(w_2)\}$.
\end{itemize}
By \Cref{lem:collinearContinuous}, the restriction of $f$ to $S(c,\rho')$ is continuous, and so each connected component of $S(c,\rho')\setminus\{w_1,w_2\}$ is mapped onto a connected subset of $S(d,\tau')\setminus\{f(w_1),f(w_2)\}$.
Thus, it cannot happen that $f(z)$ is between $f(x)$ and $f(y)$ which completes the proof.
\end{proof}

\begin{corollary}
Let $S=S(c,\rho)\cup S(c,\rho')$ and $R=S(d,\tau)\cup S(d,\tau')$ where $c,d\in\R^2$, $\rho<\rho'$ and $\tau<\tau'$.
Then the sets $S$, $R$ are collinearity isomorphic if and only if
\begin{equation*}
\frac\rho{\rho'}=\frac\tau{\tau'}.
\end{equation*}
\end{corollary}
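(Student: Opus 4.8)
The plan is to read this off directly from the two substantive results already proved, since the corollary is merely their combination. The two ingredients are: first, the elementary observation (recorded at the start of \Cref{sec:BC}) that every betweenness isomorphism is a collinearity isomorphism — three points are collinear precisely when one of them lies between the other two, so a bijection preserving betweenness preserves collinearity, and applying the same to the inverse map shows collinearity is reflected as well; and second, the genuine work done in \Cref{lem:betwVsCollinear}, namely that on the family $\A$ a collinearity isomorphism is automatically a betweenness isomorphism.

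For the forward implication, I would argue as follows. Suppose $S$ and $R$ are collinearity isomorphic, and let $f\colon S\to R$ witness this. By \Cref{lem:betwVsCollinear}, $f$ is in fact a betweenness isomorphism, so $S$ and $R$ are betweenness isomorphic; \Cref{cor:betwIso} then yields $\frac{\rho}{\rho'}=\frac{\tau}{\tau'}$.

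For the converse, suppose $\frac{\rho}{\rho'}=\frac{\tau}{\tau'}$. By \Cref{cor:betwIso}, there is a betweenness isomorphism $f\colon S\to R$, and by the elementary observation above $f$ is in particular a collinearity isomorphism; hence $S$ and $R$ are collinearity isomorphic.

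There is essentially no obstacle: the corollary is just the packaging of \Cref{lem:betwVsCollinear} with \Cref{cor:betwIso}. The only point meriting explicit mention is the (standard) equivalence, for a triple of points, between collinearity and one point lying between the other two, which is what makes the passage "betweenness isomorphism $\Rightarrow$ collinearity isomorphism" hold and is already invoked in \Cref{sec:BC}.
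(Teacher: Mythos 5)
Your proposal is correct and matches the paper's proof, which is exactly the one-line combination of \Cref{lem:betwVsCollinear} and \Cref{cor:betwIso}; your spelling out of the two directions (and of the trivial fact that a betweenness isomorphism is a collinearity isomorphism) is just a more explicit rendering of the same argument.
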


\begin{proof}
Apply \Cref{lem:betwVsCollinear} and \Cref{cor:betwIso}.
\end{proof}

\begin{corollary}
Let $S,R\in\A$ and suppose that $f\colon S\to R$ is a collinearity isomorphism.
Then $f$ extends to a unique
scaled isometry $\Phi\colon\R^2\to\R^2$.
\end{corollary}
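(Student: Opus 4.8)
The plan is to deduce this immediately from the two main results already established, since no new work is required. First I would observe that every set in $\A$ is, by definition, the union of two distinct concentric circles, hence can be written in the form $S(c,\rho)\cup S(c,\rho')$ with $c\in\R^2$ and $\rho<\rho'$ (and similarly for $R$). Thus the hypotheses of \Cref{lem:betwVsCollinear} are met, and that lemma tells us that the collinearity isomorphism $f\colon S\to R$ is in fact a betweenness isomorphism.

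Having upgraded $f$ to a betweenness isomorphism, I would then simply invoke \Cref{thm:extension}: it asserts precisely that any betweenness isomorphism between two sets from $\A$ extends to a unique scaled isometry $\Phi\colon\R^2\to\R^2$. Uniqueness of the extension is inherited from that theorem as well (and in any case a scaled isometry of the plane is determined by its values on any set with nonempty interior of affine hull, in particular on any circle). So the entire argument is the two-line composition "\Cref{lem:betwVsCollinear} then \Cref{thm:extension}," and there is no genuine obstacle to overcome here — the content has been absorbed into \Cref{lem:betwVsCollinear}, whose proof in turn relied on \Cref{lem:collinearContinuous} and \Cref{lem:doubleArcs}. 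The only point worth a word of care is the bookkeeping remark that an arbitrary $S\in\A$ admits the parametrization required to quote \Cref{lem:betwVsCollinear}, which is immediate from the definition of $\A$.

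Presented as a proof, it would read roughly: \emph{By definition of $\A$, write $S=S(c,\rho)\cup S(c,\rho')$ and $R=S(d,\tau)\cup S(d,\tau')$ with $\rho<\rho'$ and $\tau<\tau'$. By \Cref{lem:betwVsCollinear}, $f$ is a betweenness isomorphism. By \Cref{thm:extension}, $f$ extends to a unique scaled isometry $\Phi\colon\R^2\to\R^2$.} That is the whole proof, and I would keep it at exactly that length.
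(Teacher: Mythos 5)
Your proposal is correct and matches the paper's own proof exactly: the paper's argument is precisely ``Apply \Cref{lem:betwVsCollinear} and \Cref{thm:extension}.'' Nothing further is needed.
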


\begin{proof}
Apply \Cref{lem:betwVsCollinear} and \Cref{thm:extension}.
\end{proof}


\section{A little about the family \(\An\)}\label{sec:AAn}

Fix $S\in\An$ and let $S=S(c,\rho)\cup S(c',\rho')$, where $c,c'\in\R^2$ and $c\neq c'$. Then exactly one of the five conditions holds (cf.\ also \Cref{fig:An}):
\begin{enumerate}
\item[\rm (a)] one of the circles is strictly inside the other one; i.e.
$|c-c'|<\max\{\rho,\rho'\}-\min\{\rho,\rho'\}$,
\item[\rm (b)] the circles are internally tangent; i.e. 
$|c-c'|=\max\{\rho,\rho'\}-\min\{\rho,\rho'\}$,
\item[\rm (c)] the circles intersect in exactly two points; i.e. 
$\max\{\rho,\rho'\}-\min\{\rho,\rho'\}<|c-c'|<\rho+\rho'$,
\item[\rm (d)] the circles are externally tangent; i.e. $|c-c'|=\rho+\rho'$,
\item[\rm (e)] the circles are strictly outside each other; i.e. $|c-c'|>\rho+\rho'$.
\end{enumerate}

The following observation shows that betweenness isomorphisms distinguish the 
five subfamilies of the family $\An$ discerned by the above conditions.

\begin{remark}\label{rem:An}
\begin{enumerate}[\bgroup\rm(i)\egroup]
\item If two sets from the family $\An$ are betweenness isomorphic, then both must meet the same condition out of the five listed above.
\item None of the sets from the family $\An$ that meet one of the conditions $(b)-(e)$ is betweenness isomorphic with a set belonging to the family $\A$. 
\end{enumerate}
\end{remark}

\begin{proof}
(i) For every $j\in\{a,b,c,d,e\}$, let $\An^j$ denote the subfamily of $\An$ consisting of all sets that meet condition $(j)$. 
Fix $S,R\in\An$ and assume that $f\colon S\to R$ is a betweenness isomorphism. 
Our proof will be divided into five cases.

Case $S\in\An^b$. 
Fix $x\in\ext(S)$ such that for every $y\in\ext(S)\setminus\{x\}$ 
we have $\cardOfInterval (x,y)_S=1$ (such a point exists and it is unique; see \Cref{fig:An}b). 
From \cite[Lemma 2.1(i)]{DKM2024} we conclude that for every $z\in\ext(R)\setminus\{f(x)\}$ we have $\cardOfInterval (f(x),z)_R=1$, which can happen only for $R\in\An^b$, see again \Cref{fig:An}.
So far we have proved that
$S\in \An^b$ if and only if $R\in \An^b$.

\begin{figure}
\centering
\colorlet{myred}{red!85!black}
a)\begin{tikzpicture}[x=4cm,y=4cm,scale=0.4]
\clip(-1.05,-1.15) rectangle (1.15,1.05);
\draw[color=myred, line width=1.5] (0,0) circle (1);
\draw [line width=0.5pt] (0.2,0) circle (0.5);
\draw [line width=0.5pt] (-0.865,-0.5) -- (0.865,0.5);
\draw [color=myred, fill=myred] (0.865,0.5) circle (3.5pt);
\draw [color=myred, fill=myred] (-0.865,-0.5) circle (3.5pt);
\draw [fill=black] (0.575,0.33) circle (2.5pt);
\draw [fill=black] (-0.275,-0.16) circle (2.5pt);
\begin{scriptsize}
\draw[color=black] (0.75,0.5) node {$x$};	
\draw[color=black] (-0.83,-0.4) node {$y$};
\draw[color=black] (0.56,0.24) node {$u$};
\draw[color=black] (-0.17,-0.19) node {$w$};
\end{scriptsize}
\end{tikzpicture}
b)\begin{tikzpicture}[x=4cm,y=4cm,scale=0.4]
\clip (-1.05,-1.15) rectangle (1.2,1.05);
\draw [color=myred, line width=1.5] (0,0) circle (1);
\draw [line width=0.5pt] (0.5,0) circle (0.5);
\draw [line width=0.5pt] (0,-1) -- (1,0);
\draw [color=myred, fill=myred] (1,0) circle (3.5pt );
\draw [fill=black] (0.5,-0.5) circle (2.5pt);
\draw [color=myred, fill=myred] (0,-1) circle (3.5pt);		
\begin{scriptsize}
\draw[color=black] (0.9,0) node {$x$};	
\draw[color=black] (0.5,-0.4) node {$w$};
\draw[color=black] (0,-0.9) node {$y$};
\end{scriptsize}
\end{tikzpicture}
c)\begin{tikzpicture}[x=4cm,y=4cm,scale=0.4]		
\clip (-1.05,-1.15) rectangle (1.35,1.05);
\draw [line width=0.5pt] (0,0) circle (1);
\draw [line width=0.5pt] (0.8,0) circle (0.5);
\draw [color=myred, fill=myred] (0.55,0.84) circle (3.5pt);	
\draw [color=myred, fill=myred] (0.55,-0.84) circle (3.5pt);	
\draw [color=myred, fill=myred] (1.15,0.35) circle (3.5pt);	
\draw [color=myred, fill=myred] (1.15,-0.35) circle (3.5pt);
\draw [shift={(0,0)},line width=1.5pt,color=myred] plot [domain=1:5.3,variable=\t]({1*1*cos(\t r)+0*1*sin(\t r)},{0*1*cos(\t r)+1*1*sin(\t r)});
\draw [shift={(0.75,0)},line width=1.5pt,color=myred] plot [domain=5.5:7.05,variable=\t]({1.24*0.45*cos(\t r)+0*0.45*sin(\t r)},{0*1*cos(\t r)+1*0.51*sin(\t r)});
\end{tikzpicture}
d)\begin{tikzpicture}[x=4cm,y=4cm,scale=0.4]		
\clip (-1.15,-1.15) rectangle (2.2,1.15);
\draw [line width=0.5pt] (0,0) circle (1);
\draw [line width=0.5pt] (1.5,0) circle (0.5);
\draw [fill=black] (1,0) circle (2.5pt );
\draw [color=myred, fill=myred] (0.27,0.96) circle (3.5pt );	
\draw [color=myred, fill=myred] (0.27,-0.96) circle (3.5pt );	
\draw [color=myred, fill=myred] (1.6,0.485) circle (3.5pt );	
\draw [color=myred, fill=myred] (1.6,-0.485) circle (3.5pt );
\draw [shift={(0,0)},line width=1.5pt,color=myred] plot [domain=1.3:5,variable=\t]({1*1*cos(\t r)+0*1*sin(\t r)},{0*1*cos(\t r)+1*1*sin(\t r)});
\draw [shift={(1.45,0)},line width=1.5pt,color=myred] plot [domain=5:7.57,variable=\t]({1*0.55*cos(\t r)+0*1*sin(\t r)},{0*5*cos(\t r)+0.992*0.51*sin(\t r)});
\draw [line width=0.5pt] (1,0) -- (0,1);
\draw [color=myred, fill=myred] (0,1) circle (3.5pt);
\begin{scriptsize}
\draw[color=black] (0,0.9) node {$x$};	
\draw[color=black] (1.1,0) node {$w$};
\end{scriptsize}
\end{tikzpicture}
e)\begin{tikzpicture}[x=4cm,y=4cm,scale=0.4]		
\clip (-1.15,-1.15) rectangle (2.6,1.15);
\draw [line width=0.5pt] (0,0) circle (1);
\draw [line width=0.5pt] (2,0) circle (0.5);
\draw [color=myred, fill=myred] (0.27,0.96) circle (3.5pt);	
\draw [color=myred, fill=myred] (0.27,-0.96) circle (3.5pt);	
\draw [color=myred, fill=myred] (2.1,0.485) circle (3.5pt);	
\draw [color=myred, fill=myred] (2.1,-0.485) circle (3.5pt);
\draw [line width=0.5pt] (1,0) -- (2.28,0.4);
\draw [fill=black] (1,0) circle (2.5pt);
\draw [fill=black] (1.53,0.17) circle (2.5pt);
\draw [color=myred, fill=myred] (2.28,0.4) circle (3.5pt);	
\draw [shift={(0,0)},line width=1.5pt,color=myred] plot [domain=1.3:5,variable=\t]({1*1*cos(\t r)+0*1*sin(\t r)},{0*1*cos(\t r)+1*1*sin(\t r)});
\draw [shift={(1.95,0)},line width=1.5pt,color=myred] plot [domain=5:7.57,variable=\t]({1*0.55*cos(\t r)+0*1*sin(\t r)},{0*5*cos(\t r)+0.992*0.51*sin(\t r)});
\begin{scriptsize}
\draw[color=black] (2.35,0.48) node {$x$};	
\draw[color=black] (0.9,0) node {$w$};
\end{scriptsize}
\end{tikzpicture}
\caption{Sets from $\An$ that meet the respective conditions (a)--(e)
from Section~\ref{sec:AAn}.	
The arcs marked \emph{red} consist of the extreme points of the sets.
}
\label{fig:An}
\end{figure}
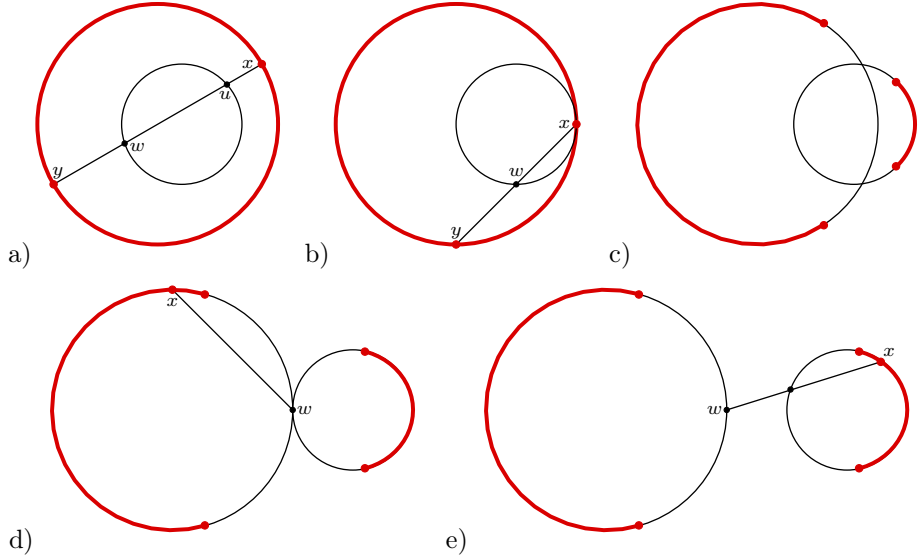

Case $S\in\An^a$. 
In this case we see that for all distinct $u,w\in S\setminus\ext(S)$ there are $x,y\in\ext(S)$ such that $(x,y)_S=\{u,w\}$ (see \Cref{fig:An}a).
By \cite[Lemma 2.1(i)]{DKM2024}, for all distinct
$u',w'\in R\setminus\ext(R)$ there are $x',y'\in\ext(R)$ such that $(x',y')_R=\{u',w'\}$, which is possible only for $R\in\An^a\cup\An^b$
since otherwise this
fails for $u', w'$ lying, e.g., on a suitable
vertical line in \Cref{fig:An}c,d,e.
But from the previous case, we infer that $R\notin\An^b$, and hence $R\in\An^a$.

Case $S\in\An^d$. 
Fix $w\in S\setminus\ext(S)$ such that for every $x\in\ext(S)$ we have $(w,x)_S=\emptyset$ 
(such a point exists and it is unique; see \Cref{fig:An}d). 
\cite[Lemma 2.1(i)]{DKM2024} implies that for every $x'\in \ext(R)$
we have $(f(w),x')_R=\emptyset$, which can take place only for $R\in\An^d$. 

Case $S\in\An^e$.   
In this case for all $w\in S\setminus\ext(S)$ and $x\in\ext(S)$ we have 
$\cardOfInterval (w,x)_S\in\{0,1\}$ (see \Cref{fig:An}e). 
By \cite[Lemma 2.1(i)]{DKM2024} for all $w'\in R\setminus\ext(R)$ and $x'\in\ext(R)$ we have $\cardOfInterval (w',x')_R\in\{0,1\}$, which is true only for $R\in\An^a\cup\An^b\cup\An^e$. But from the previous cases, we conclude that $R\notin\An^a\cup\An^b$, and hence $R\in\An^e$.

Case $S\in\An^c$.
This immediately follows by the previous cases.

(ii) It suffices to note that the proof of (i) will remain valid if we replace the subfamily $\An^a$ with the family $\A$.
\end{proof}

Assertion (i) of \cref{rem:An} allows us to
guess that answering
\cref{Question1} for the 
family $\F=\An$ is a more involved task than that for 
the family $\F=\A$
as in \cref{sec:results}. 
Without further refinements,
the method used to answer \cref{Question1} 
for the family $\F=\A$ is not sufficient to answer \cref{Question1} for the family $\F=\An$. 
However, in view of assertion (ii) of \cref{rem:An}
it is natural to ask if there exists a set belonging 
to the family $\A$ which is not betweenness isomorphic 
with any set of the family $\An$ that meets condition (a). 
The following example shows that this is the case.

\begin{example}\label{ex:AAn}
Fix $c\in\mathbb R^2$, $\rho\in(0,\infty)$, and consider the set $S=S(c,\rho)\cup S(c,2\rho)$ that belongs to the family $\A$. 
We show that $S$ is betweenness isomorphic to
no member of the family $\An$.

Let $aa'e$ be an equilateral triangle inscribed into the outer circle
of~$S$, as in \Cref{fig:concentric}a.
Then the 
sides of the triangle 
 are tangent to the inner circle, meaning that 
$(a,e)_S=\{b\}$, $(a',e)_S=\{b'\}$, and $\cardOfInterval (a,a')_S=1$.
Also,
the invariant $M(S)$ has value $3$
by \Cref{lem:valueOfM}.
Let $d$ and $d'$ be the points on the outer circle of $S$ such that $b\in(a',d)_S$ and $b'\in(a,d')_S$, respectively. 
Then, due to the symmetry of $S$, we have $\cardOfInterval (d,d')_S=1$ (see \Cref{fig:concentric}a).

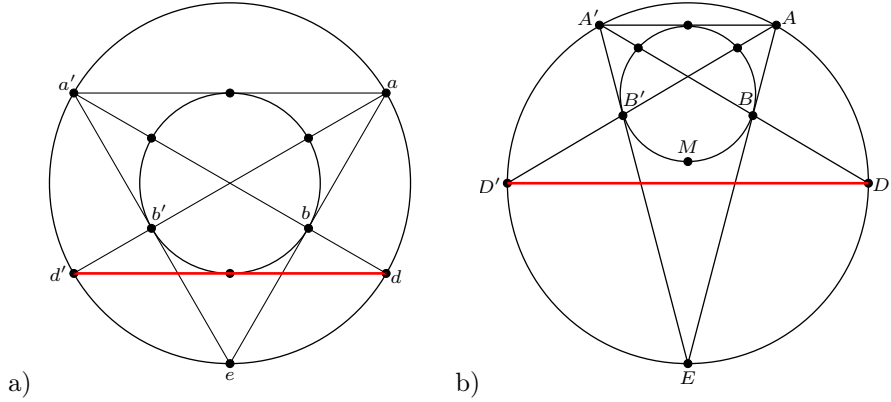
\begin{figure}
\colorlet{myred}{red!85!black}
a)\begin{tikzpicture}[x=4cm,y=4cm,scale=0.6]
\clip(-1.1,-1.15) rectangle (1.2,1.05);
\draw[line width=0.5pt] (0,0) circle (1);
\draw[line width=0.5pt] (0,0) circle (0.5);
\draw [fill=black] (0.865,0.5) circle (2.5pt);
\draw [fill=black] (-0.865,0.5) circle (2.5pt);
\draw [fill=black] (0,0.5) circle (2.5pt);
\draw [fill=black] (0.435,-0.25) circle (2.5pt);
\draw [fill=black] (-0.435,-0.25) circle (2.5pt);
\draw [fill=black] (0.435,0.25) circle (2.5pt);
\draw [fill=black] (-0.435,0.25) circle (2.5pt);
\draw [fill=black] (0.865,-0.5) circle (2.5pt);
\draw [fill=black] (-0.865,-0.5) circle (2.5pt);
\draw [fill=black] (0,-0.5) circle (2.5pt);
\draw [fill=black] (0,-1) circle (2.5pt);
\begin{scriptsize}
\draw[color=black] (0.9,0.55) node {$a$};	
\draw[color=black] (-0.9,0.56) node {$a'$};
\draw[color=black] (0.42,-0.16) node {$b$};
\draw[color=black] (-0.39,-0.16) node {$b'$};
\draw[color=black] (0.92,-0.52) node {$d$};
\draw[color=black] (-0.95,-0.5) node {$d'$};
\draw[color=black] (0,-1.07) node {$e$};
\end{scriptsize}
\draw (30:1) coordinate (A) -- (150:1) coordinate (A') -- (270:1) -- cycle ;
\draw (330:1) coordinate (D) -- (210:1) coordinate (D');
\draw[red, line width=1pt] (210:1)--(330:1);
\draw (A)--(D') (A')--(D);
\end{tikzpicture}
b)\begin{tikzpicture}[x=4cm,y=4cm,scale=0.6]
\clip (-1.15,-1.15) rectangle (1.2,1.05);
\draw [color=black, line width=0.5pt] (0,0) circle (1);
\draw [color=black, line width=0.5pt] (0,0.495) circle (0.375);
\draw [fill=black] (0,0.876307) circle (2.5pt );
\draw [fill=black] (-0.49,0.876307) circle (2.5pt );
\draw [fill=black] (0.49,0.876307) circle (2.5pt );	
\draw [color=black, line width=0.5pt] (-0.49,0.876307) -- (0.49,0.876307);
\draw[color=black, line width=0.5pt] (0,-1) -- (-0.49,0.876307);
\draw[color=black, line width=0.5pt] (0,-1) -- (0.49,0.876307);
\draw [fill=black] (1,0) circle (2.5pt );
\draw [fill=black] (-1,0) circle (2.5pt );
\draw [fill=black] (0,-1) circle (2.5pt );
\draw [fill=black] (-0.36,0.375) circle (2.5pt );
\draw [fill=black] (0.36,0.375) circle (2.5pt );
\draw[color=black, line width=0.5pt] (-0.481754,0.876307) -- (1,0);
\draw[color=black, line width=0.5pt] (0.481754,0.876307) -- (-1,0);
\draw [fill=black] (0.275,0.75) circle (2.5pt );
\draw [fill=black] (-0.275,0.75) circle (2.5pt );
\draw [fill=black] (0,0.12) circle (2.5pt );
\draw[color=red, line width=1pt] (-1,0) -- (1,0);
\begin{scriptsize}
\node [align=left] at (0,-1.08) {$E$};	
\node [align=left] at (0.56,0.92) {$A$};
\node [align=left] at (-0.55,0.92) {$A'$};
\node [align=left] at (1.07,-0.01) {$D$};
\node [align=left] at (-1.1,-0.01) {$D'$};	
\node [align=left] at (0.32,0.47) {$B$};
\node [align=left] at (-0.295,0.47) {$B'$};	
\node [align=left] at (0,0.2) {$M$};	
\end{scriptsize}
\end{tikzpicture}
\caption{A pair of a) concentric circles $S$ with $M(S)=3$
b) non-concentric circles $R$ with `$M(R)=3$'.}
\label{fig:concentric}
\end{figure}

By \cref{rem:An} the set $S$ cannot be betweenness isomorphic to any set belonging to the family $\An$ that meets one of the conditions (b)--(e). 
We want to show that the set $S$ is also not betweenness isomorphic to any set belonging to the family $\An$ that meets condition (a). 
So, let us fix a set $R\in\An$ that meets condition (a). 
By rescaling, translating, and rotating, we can assume that $R=S((0,0),1)\cup S((0,y_0),r)$ with $y_0\in(-1,1)\setminus\{0\}$,
or, if we wish, with $y_0\in(0,1)$, and with
$r\in(0,1-|y_0|)$.  
Now let us suppose that, contrary to our claim, there exists a betweenness isomorphism $f\colon S\to R$.
Using the rotational symmetry of $S$ and, eventually,
reassigning
the positions of the points in 
\Cref{fig:concentric}a
we can assume that $f(a)=A$, $f(a')=A'$ are
positioned on a horizontal line
above $(0,y_0)$,
which is then tangent (by the properties of~$f$) to the inner circle of~$R$,
as in \Cref{fig:concentric}b.
Then, with $f(e)=E$, 
the other two sides of triangle $AA'E$ are tangent
to the inner circle
at $f(b)=B$, $f(b')=B'$, 
as $f$ is a betweenness isomorphism. 
Likewise, the points $f(d)=D$ and $f(d')=D'$ take the position
indicated by \Cref{fig:concentric}b.
Finally, $DD'$ should be tangent to the inner circle,
in contradiction to the figure and the following discussion.

Before we continue our argument, we wish to stop
to make a remark related to the triangle $AA'E$:
By Poncelet's porism (see \cite[Sections 565–567)]{Ponceleti}, 
cf.\ also the survey
\cite{Centina2016i, Centina2016ii}),
there are many triangles
inscribed into $S((0,0),1)$ whose sides
are tangent to $S((0,y_0), r)$.
We may use \eqref{eq:A_R} (with arbitrary $u_R\in R\setminus\ext(R)$)
to make a non-concentric version of definition
 of the arcs ``$A_R(\beta)$''
(here $u_R$ takes the role of the parameter $\beta$; the relation is $u_R=u_R(\beta) = (0 + r\cos \beta, y_0 + r\sin\beta)$ in our case)
and then define $M(\cdot)$ as usual,
which makes $M(\cdot)$ applicable to $R$.
Then the porism may be used to show that $M(R)=3$.
    It also implies that we cannot try to prove our non-isomorphism
    claim building merely on
    (alleged) non-existence of the triangles
    or on the value of the invariant $M(\cdot)$
    as such without further development.

We have
$A=(\sqrt{1-y^2},y)$ for some $y\in(-1,1)\setminus\{\frac{1}{2}\}$;
$y=\frac{1}{2}$
leads to the concentric case. Tedious calculations
give
\[
y_0=1-\sqrt{2-2y},\quad r=y+\sqrt{2-2y}-1,
\]
\[
B=\left(\frac{(y-1)\sqrt{1+y}+\sqrt{2-2y^2}}{\sqrt{2}},\frac{\sqrt{2}y-(1+y)\sqrt{1-y}}{\sqrt{2}}\right);
\]
the second coordinates of points $D$ and $D'$ are the same and equal to
\[
D_y=\frac{2y^2+(2+4y)\sqrt{2-2y}-4y-1}{2\sqrt{2-2y}-5},
\]
whereas the second coordinate of the lowest point $M$
of the inner circle in $R$ equals 
\[
M_y=2-y-2\sqrt{2-2y}.
\]
Then (see \Cref{fig:concentric}b)
\[
M_y-D_y=\frac{
    2\left(\sqrt{1-y}-\sqrt2\right)^2
    \left(\sqrt{1-y}-
          \frac{\sqrt2}{2}
    \right)^2
}{5-2\sqrt{2-2y}}>0;
\] 
equality $M_y=D_y$ can happen only for $y\in\{-1,\frac{1}{2}\}$, but both
these numbers are outside of our domain.
In consequence,
\[
0=\cardOfInterval (D,D')_R=\card f((d,d')_S)=\cardOfInterval (d,d')_S=1,
\] 
a contradiction.
\end{example}

We finish this paper with an example showing that there are sets belonging to the family $\An$ that meet condition (a) which are not betweenness isomorphic with any set of the family $\A$.

\begin{example}
Fix $y\in(-1,\frac{1}{2})\cup(\frac{1}{2},1)$ and consider the set
$R=S((0,0),1)\cup S((0,1-\sqrt{2-2y}),y+\sqrt{2-2y}-1)\in\An$ occurring in \Cref{ex:AAn}. 
We show that $R$
is betweenness isomorphic to no member of $\A$. 
Put $A=(\sqrt{1-y^2},y)$, $A'=(-\sqrt{1-y^2},y)$, and $E=(0,-1)$. 
Then the sides of the triangle $AA'E$ are tangent to the inner circle, i.e.  $(A,E)_R=\{B\}$, $(A',E)_R=\{B'\}$, and $\cardOfInterval (A,A')_R=1$.

Fix now $S=S(c,\rho)\cup S(c,\rho')\in\A$ with $\rho<\rho'$ and suppose, to derive a contradiction, that there exists a betweenness isomorphism $f\colon R\to S$. 
Then the points $A,A',E$ are mapped onto some points $a,a',e$, respectively,
where the sides of the triangle $aa'e$ are tangent to the inner circle 
$S(c,\rho)$.
As we deal with concentric circles, we obtain $\rho'=2\rho$
(see \Cref{fig:concentric}).
Now, \Cref{ex:AAn} reveals that $f$ cannot be a betweenness isomorphism.
\end{example}

\ifRACSAMsubmissionSNJNL

\subsubsection*{Competing interests}
The authors declare that they have no competing interests.

\fi

\ifRACSAMsubmissionSNJNL
\else
\bibliographystyle{plain}
\fi
\bibliography{bibliography}

\end{document}